\newcommand{\nwc}{\newcommand}
\nwc\eps{\varepsilon}
\newtheorem{theorem}{Theorem}[section]
\newtheorem{proposition}[theorem]{Proposition}
\newtheorem{lemma}[theorem]{Lemma}
\newtheorem{claim}{Claim}
\newtheorem{remark}{Remark}
\theoremstyle{remark}
\newtheorem{definition}{Definition}
\numberwithin{equation}{section}
\begin{document}

\title[Well-posedness and stabilization of higher-order Boussinesq system]{
Well-Posedness and Long-Time Dynamics of a Water-Waves Model with Time-Varying Boundary Delay}

\author[Bautista]{G. J. Bautista}
\address{Facultad de Ingeniería, Escuela Profesional  de Ingenier\'ia Civil, Universidad Tecnológica de los Andes (UTEA), Sede Abancay,
 Av. Perú 700, Apurímac, Peru.}
\email{gbautistas@utea.edu.pe}


\author[Capistrano-Filho]{R. de A.  Capistrano--Filho$^*$}
\address{
Departamento de Matem\'atica, Universidade Federal de Pernambuco\\
Cidade Universit\'aria, 50740-545, Recife (PE), Brazil\\
Email address: \normalfont\texttt{roberto.capistranofilho@ufpe.br}}
	\thanks{$^*$Corresponding author: roberto.capistranofilho@ufpe.br}

\author[Chentouf]{B. Chentouf}
\address{
Faculty of Science, Kuwait University \\
Department of Mathematics, Safat 13060, Kuwait\\
Email address: \normalfont\texttt{boumediene.chentouf@ku.edu.kw}}

\author[Sierra]{O. Sierra Fonseca}
\address{Escola de Matemática Aplicada\\ Fundação Getúlio Vargas, Praia de Botafogo 190, 22250-900, Rio de Janeiro (RJ), Brazil \\
Email address: \normalfont\texttt{oscar.fonseca@fgv.br}}

\subjclass[2010]{Primary: 35Q53, 93D15, 93C20; Secondary: 93D30.}

\keywords{Higher order Boussinesq system, Stabilization, Delay, Decay rate, Lyapunov approach}

\date{\today}

\numberwithin{equation}{section}

\begin{abstract}
A higher-order nonlinear Boussinesq system with a time-dependent boundary delay is considered. Sufficient conditions are presented to ensure the well-posedness of the problem by utilizing Kato’s variable norm technique and the Fixed-Point Theorem. More significantly, the energy decay for the linearized problem is demonstrated using the energy method.
\end{abstract}

\maketitle

\section{Introduction}

\subsection{Background}
The Boussinesq system comprises a set of nonlinear partial differential equations (PDEs) that model wave dynamics in fluids with small amplitude and long wavelengths. Originally formulated by the French mathematician Joseph Boussinesq in the 19th century to describe shallow water waves \cite{bou}, this system has since been recognized as a model for various physical phenomena, including ocean currents, atmospheric circulation, and heat transfer in fluids. Consequently, the Boussinesq system remains an essential tool in fluid dynamics, with broad applications in fields such as meteorology, oceanography, and engineering.

In more recent studies, Bona \textit{et al.} \cite{Bona2002, Bona2004}  introduced a four-parameter family of Boussinesq systems to describe the motion of small-amplitude, long waves on the surface of an ideal fluid under gravity, particularly in scenarios where the motion is predominantly two-dimensional. In particular, the authors in \cite{Bona2002, Bona2004} investigated the following system:
\begin{align}\label{eq:Boussi}
\left\{\begin{array}{ll}
			\vspace{2mm}\eta_t +  \omega_{x} + a \omega_{xxx}-b\eta_{txx} + a_1 \omega_{xxxxx} + b_1 \eta_{txxxx} \\
			\vspace{2mm}\qquad\qquad\qquad\qquad\qquad=-(\eta \omega)_x +b (\eta \omega)_{xxx}- \alpha'(\eta \omega_{xx})_x, \\
			\vspace{2mm}\omega_t+\eta_{x} + c\eta_{xxx}-d\omega_{txx} + c_1\eta_{xxxxx}+d_1 \omega_{txxxx}\\
			\qquad\qquad\qquad\qquad\qquad=- \omega \omega_x -c(\omega \omega_x)_{xx}- (\eta \eta_{xx})_x +\beta' \omega_x \omega_{xx}+\rho\, \omega \omega_{xxx}.
		\end{array}\right.
	\end{align}
In this context, \(\eta\) represents the elevation of the fluid surface from its equilibrium position, while \(\omega = \omega_{\theta}\) denotes the horizontal velocity of the flow at a height \(\theta h\), where \(h\) is the undisturbed depth of the fluid and \(\theta\) is a constant within the interval \([0,1]\). The variables \(x\) and \(t\) correspond to space and time, respectively, and the physical parameters \(a, b, c, d, a_1, c_1, b_1, d_1\) must satisfy the following relationships:
	\begin{align*}
	\begin{cases}
		\begin{array}{ll}
			\vspace{2mm} a + b=\frac{1}{2}(\theta^2 - \frac{1}{3}), \quad c+d=\frac{1}{2}(1-\theta^2 ),\\
			\vspace{2mm}a_1 - b_1 = -\frac{1}{2}(\theta^2 - \frac{1}{3})b + \frac{5}{24}(\theta^2 - \frac{1}{5})^2,\\
			\vspace{2mm} c_1-d_1 = \frac{1}{2}(1-\theta^2)c + \frac{5}{24}(1- \theta^2 )(\theta^2-\frac{1}{5}),\\
 \alpha'=a+b-\frac{1}{3},\,\, \beta'=c+d -1,\,\, \rho=c+d.
		\end{array}
		\end{cases}
	\end{align*}

Stabilization results for the higher-order system \eqref{eq:Boussi} on the periodic domain were established in \cite{bautista-pazoto} under the conditions \( a_1 = c_1 = 0 \), with general damping applied to each equation. Furthermore, the local exact controllability of the system \eqref{eq:Boussi} was investigated in \cite{bautista-micu-pazoto}, where the control is localized within the interior of the domain and influences only one equation.

Negative controllability results are explored in \cite{BP,fp-comun} when the third and fifth order Korteweg-de Vries (KdV) terms are removed from the system mentioned above, that is, \eqref{eq:Boussi} with \(a = a_1 = c = c_1 = 0\). In this case, the system consists of two coupled Benjamin-Bona-Mahony (BBM)-type equations. The authors demonstrated that the linear model is approximately controllable but not spectrally controllable. This implies that although any state can be brought arbitrarily close to another, no finite linear combination of eigenfunctions, other than zero, can be driven to zero.

Let us now consider $b = d = b_1 = d_1= 0$ and make a scaling argument to obtain the fifth-order Boussinesq system
\begin{equation}\label{eq:KdV-KdVa}
\left\{\begin{array}{ll}
			\vspace{2mm}\eta_t +  \omega_{x} + a \omega_{xxx} + a_1 \omega_{xxxxx} =-(\eta \omega)_x - \alpha'(\eta \omega_{xx})_x, \\
			\omega_t+\eta_{x} + c\eta_{xxx} + c_1\eta_{xxxxx}=- \omega \omega_x -c(\omega \omega_x)_{xx}- (\eta \eta_{xx})_x +\beta' \omega_x \omega_{xx}+\rho \omega \omega_{xxx}.
		\end{array}\right.
\end{equation}
In the above system, we note that $c, a_1 \geq 0$. Thus, we consider the following case:
\begin{align}\label{conditions_para}
a=c>0, \, \, \, \text{and} \, \, \, c_1 =a_1 >0.
\end{align}

It is important to highlight that, to the best of our knowledge, there are no existing results that combine a damping mechanism with a boundary time-varying delay to achieve stabilization of the higher-order Boussinesq system associated with \eqref{eq:KdV-KdVa}. This gap forms the main motivation for the present work. It is also worth emphasizing the practical prevalence of time delays in control systems, a phenomenon that is virtually unavoidable due to factors such as the lag between sensors, actuators, and data processing. In light of this, significant efforts have been made to mitigate or eliminate the effects of constant time delays. For example, stability results under smallness conditions on the domain length $L$ and the initial data have been established in \cite{Valein2019, Valein2022} for the KdV equation and in \cite{ZAMP} for the Kawahara equation. In the case of time-dependent delays, similar results have been obtained in \cite{Parada2022} for the KdV equation and \cite{Boumediene2024} for the coupled KdV-KdV system.

\subsection{Notations and main results}
This article is concerned with the following system
\begin{equation}\label{eq:KdV-KdV}
\begin{cases}
\eta_t +  \omega_{x} + a \omega_{xxx} + a_1 \omega_{xxxxx} =-(\eta \omega)_x - \alpha'(\eta \omega_{xx})_x,& \text{in} \; \mathbb{R}^{+}\times(0,L),\vspace{0.2cm} \\
\omega_t+\eta_{x} + c\eta_{xxx} + c_1\eta_{xxxxx}=- \omega \omega_x -c(\omega \omega_x)_{xx}\vspace{0.2cm}\\ \quad\quad\quad\quad\quad\quad\quad\quad\quad\quad\quad\quad\quad\vspace{0.2cm}- (\eta \eta_{xx})_x +\beta' \omega_x \omega_{xx}+\rho \omega \vspace{2mm}\omega_{xxx}, &\text{in} \;  \mathbb{R}^{+}\times(0,L), \\
\vspace{2mm}\eta(t,0) = \eta(t,L) = \eta_x(t,0) = \eta_x(t,L) = \eta_{xx}(t,0)= 0, & t\in\mathbb{R}^{+},\\
\vspace{2mm}\omega(t,0) = \omega(t,L) = \omega_x(t,0) = \omega_x(t,L) = 0, & t\in\mathbb{R}^{+},\\
\vspace{2mm}\omega_{xx}(t,L) = \alpha \eta_{xx}(t,L)+\beta \eta_{xx}(t-\tau(t),L), & t>0,\\
\vspace{2mm}\eta_{xx}(t-\tau(0),L) = z_0(t-\tau(0))\in L^2(0,1) , & 0<t<\tau(0), \\
\left(\eta(0,x),\omega(0,x)\right) = \left(\eta_0(x),\omega_0(x)\right)\in X_0, & x\in(0,L),
\end{cases}
\end{equation}where  the parameters $a, c, a_1, c_1$
verify \eqref{conditions_para}. Moreover, we assume that there exist  positive constants $\tau_0$, $M$ and $d<1$ such that the time-dependent delay function $\tau(t)$ satisfies the following standard conditions:
\begin{equation}\label{eq:TauCond}
\begin{cases}
0 < \tau_0 \leq \tau(t) \leq M, \quad
\dot\tau(t)\leq d < 1,&\forall t\geq 0,\\
\tau\in W^{2, \infty}([0,T]),&T>0.
\end{cases}
\end{equation}

Finally, the feedback gains $\alpha >0$ and $\beta$ must obey the following constraint: 
\begin{equation}\label{matrix_nega_def_albe}
\text{The matrix} \; \Phi_{\alpha,\beta} = \begin{pmatrix}
-2a_1 \alpha + \lvert\beta\rvert & -a_1\beta\\
-a_1\beta & \lvert\beta\rvert (d -1 )
\end{pmatrix}
\text{is negative definite}. 
\end{equation}
The condition \eqref{matrix_nega_def_albe} ensures the dissipation of the system \eqref{eq:KdV-KdV}. It is worth mentioning that a similar condition is used for other types of delayed dispersive systems (see, for instance  \cite{Boumediene2024} and \cite{Parada2022}). Furthermore, recalling that $0\leq d <1$ and $a_1>0$, one can readily check that \eqref{matrix_nega_def_albe} is fulfilled if, for instance, the feedback gains $\alpha$ and $\beta$ satisfy
\begin{equation*}
\alpha >\frac{\lvert\beta\rvert}{2a_1}\left(\frac{a_1^2+1-d}{1-d}\right).
\end{equation*}

Next, let $X_{0}:= L^2(0,L)\times L^2(0,L),$ and the state space
$$ H:=X_0 \times L^2(0,1)$$
equipped with the inner product
\begin{equation*}
\begin{aligned}
\left\langle \left(\eta,\omega,z\right), \left(\tilde\eta, \tilde\omega, \tilde{z}\right)
\right\rangle_{t}
=\ &
\left\langle\left(\eta, \omega \right),\left(	\tilde\eta , \tilde\omega\right)
\right\rangle_{X_0}
+ \lvert \beta\rvert  \tau(t)
\left\langle
	z, \tilde z
\right\rangle_{L^2(0,1)} ,
\end{aligned}
\end{equation*}
for any $(\eta,\omega;z), (\tilde\eta,\tilde\omega;\tilde z)\in H$. Moreover, we shall consider the space
$$\mathcal{B}:=C([0, T ], X_0) \cap L^2 (0, T, [H^2_0(0,L)]^2),$$
whose norm is
$$
\lVert (\eta,\omega) \rVert_{\mathcal{B}} = \sup_{t\in[0,T]} \lVert (\eta(t),\omega(t)) \rVert_{X_0}
+\lVert (\eta,\omega) \rVert_{ L^2 (0, T, [H^2_0(0,L)]^2)}.$$

To present our first result, let us introduce the following space
\begin{equation}\label{x3}
X_3:=\left\{(\eta,\omega) \in \left[ H^3(0,L)\cap H_0^2(0,L) \right]^2 | \,\eta_{xx}(0)=0\right\}.
\end{equation}
The first result of this manuscript ensures the local well-posedness of the system \eqref{eq:KdV-KdV}.
\begin{theorem}\label{nonlinearT}
 Let $T>0$ and the parameters $a, c, a_1, c_1$ verify \eqref{conditions_para}. Then, there exists $\theta=\theta(T)>0$ such that, for every $\left(\eta_0, \omega_0;z_0\right) \in {X}_3\times L^2(0,1)$ satisfying
$$
\left\|\left(\eta_0, \omega_0\right)\right\|_{[ H^3(0,L)\cap H_0^2(0,L)]^2}<\theta,
$$
the system \eqref{eq:KdV-KdV} admits a unique solution $(\eta, \omega) \in C\left([0, T] ; X_3 \right)$. Moreover
$$
\|(\eta, \omega)\|_{C\left([0, T]: [ H^3(0,L)\cap H_0^2(0,L)]^2\right)} \leq C\left\|\left(\eta_0, u_0\right)\right\|_{[ H^3(0,L)\cap H_0^2(0,L) ]^2}
$$
for some positive constant $C=C(T)$.
\end{theorem}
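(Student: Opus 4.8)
The plan is to regard \eqref{eq:KdV-KdV} as a semilinear perturbation of a non-autonomous linear evolution, to encode the boundary delay through an auxiliary transport equation, to solve the resulting linear problem by Kato's theory of evolution systems with time-dependent norms, and finally to close a contraction argument for the nonlinearity. Following the device of Nicaise and Pignotti, introduce
\[
z(t,\rho):=\eta_{xx}\bigl(t-\tau(t)\rho,L\bigr),\qquad (t,\rho)\in\mathbb{R}^{+}\times(0,1),
\]
which, by \eqref{eq:TauCond}, solves the transport equation
\[
\tau(t)\,z_{t}(t,\rho)+\bigl(1-\dot\tau(t)\rho\bigr)z_{\rho}(t,\rho)=0,
\]
with $z(t,0)=\eta_{xx}(t,L)$ and $z(0,\cdot)$ the rescaled initial history built from $z_{0}$. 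The delayed boundary condition in \eqref{eq:KdV-KdV} becomes $\omega_{xx}(t,L)=\alpha\,\eta_{xx}(t,L)+\beta\,z(t,1)$, and, writing $U:=(\eta,\omega,z)$, the system takes the abstract form $\dot U(t)=A(t)U(t)+\mathcal N(U(t))$ on $H=X_{0}\times L^{2}(0,1)$, where $A(t)$ gathers the two fifth-order spatial operators (carrying the boundary conditions of \eqref{eq:KdV-KdV}) and the transport operator on the last component, its domain $\mathcal D$ encoding in addition the coupling $z(\cdot,0)=\eta_{xx}(\cdot,L)$ and the compatibility relation $\eta_{xx}(0)=0$ built into $X_{3}$, and $\mathcal N(U)$ collects the six nonlinear terms of \eqref{eq:KdV-KdV}.

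\emph{Linear theory.} Equip $H$ with the $t$-dependent inner product introduced before the theorem; since $\tau(0)\le\tau(t)\le M$ the norms $\|\cdot\|_{t}$ are uniformly equivalent on $[0,T]$ and $\mathcal D$ is $t$-independent. I would verify the three hypotheses of Kato's theorem. \emph{(i) Quasi-dissipativity:} integrating $\langle A(t)U,U\rangle_{t}$ by parts for $U\in\mathcal D$ and using \eqref{conditions_para} and the homogeneous boundary conditions, all boundary contributions of the first- and third-order terms cancel, leaving the fifth-order contribution proportional to $\eta_{xx}(t,L)\,\omega_{xx}(t,L)$ together with the transport term $\tfrac{|\beta|}{2}\bigl(|z(t,0)|^{2}-(1-\dot\tau(t))|z(t,1)|^{2}\bigr)$; substituting $\omega_{xx}(t,L)=\alpha\eta_{xx}(t,L)+\beta z(t,1)$ and $z(t,0)=\eta_{xx}(t,L)$ leaves a quadratic form in $\bigl(\eta_{xx}(t,L),z(t,1)\bigr)$ which is negative semidefinite precisely because of \eqref{eq:CCond2} together with $\dot\tau\le d<1$, so $\langle A(t)U,U\rangle_{t}\le\kappa\|U\|_{t}^{2}$ for a suitable $\kappa\ge0$. \emph{(ii) Maximality:} for $\lambda>\kappa$, solving $(\lambda I-A(t))U=F$ reduces, after eliminating $z$ from the transport ODE, to a coupled linear fifth-order boundary value problem, solvable by a Lax--Milgram / Fredholm argument. \emph{(iii) Regularity in $t$:} since $\tau\in W^{2,\infty}$, the map $t\mapsto A(t)\in\mathcal L(\mathcal D,H)$ is Lipschitz. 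Kato's theorem then furnishes an evolution family $\{U(t,s)\}$ on $H$ leaving $\mathcal D$ invariant, hence, via Duhamel, well-posedness of the linear nonhomogeneous problem together with $\|U(t)\|_{t}\le C\bigl(\|U_{0}\|_{0}+\int_{0}^{t}\|\mathcal N(s)\|\,ds\bigr)$. In parallel, the multiplier method (testing against $x\eta_{x}$, $x\omega_{x}$ and their derivatives) yields the Kato smoothing estimate: data in $X_{3}$ produce $(\eta,\omega)\in L^{2}(0,T;[H^{5}(0,L)\cap H_{0}^{2}(0,L)]^{2})$, plus hidden regularity of the interior traces $\eta_{xxx}(\cdot,0),\omega_{xxx}(\cdot,0)$; the inhomogeneous version converts a source of the form $\partial_{x}^{j}g$ with $j\le2$ into a bounded map into $C([0,T];X_{3})\cap L^{2}(0,T;[H^{5}\cap H_{0}^{2}(0,L)]^{2})$, the two outer derivatives being absorbed by the smoothing gain.

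\emph{Nonlinear fixed point.} Fix $T>0$, set $\mathcal B_{3}:=C([0,T];X_{3})\cap L^{2}(0,T;[H^{5}(0,L)\cap H_{0}^{2}(0,L)]^{2})$ --- the $X_{3}$-level analogue of $\mathcal B$ --- with closed ball $B_{R}=\{\,\|(\eta,\omega)\|_{\mathcal B_{3}}\le R\,\}$, and define $\Gamma:(\eta,\omega)\mapsto(\tilde\eta,\tilde\omega)$ by solving the linear problem above with source $\mathcal N(\eta,\omega)$. The key point is that every term of $\mathcal N$ is a finite sum of terms of the form $\partial_{x}^{j}(pq)$ with $j\le2$, where $p,q$ run over $\eta,\omega$ and their first two derivatives: indeed $-(\eta\omega)_{x}=-\partial_{x}(\eta\omega)$, $-c(\omega\omega_{x})_{xx}=-c\,\partial_{x}^{2}(\omega\omega_{x})$, $\rho\,\omega\omega_{xxx}=\rho\,\partial_{x}^{2}(\omega\omega_{x})-3\rho\,\omega_{x}\omega_{xx}$, $-(\eta\eta_{xx})_{x}=-\partial_{x}^{2}(\eta\eta_{x})+2\eta_{x}\eta_{xx}$, and the terms $\beta'\omega_{x}\omega_{xx}$ and $-\alpha'(\eta\omega_{xx})_{x}$ decompose likewise. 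Using $H^{3}(0,L)\hookrightarrow C^{2}([0,L])$, the Moser/Kato--Ponce product estimates, the embedding $L^{2}(0,T)\hookrightarrow L^{1}(0,T)$, and the smoothing of the previous step (placing the highest-regularity factor in the $L^{2}(0,T;H^{5})$ norm and the other in $C^{2}$), one arrives at $\|\Gamma(\eta,\omega)\|_{\mathcal B_{3}}\le C_{0}\|(\eta_{0},\omega_{0})\|_{[H^{3}\cap H_{0}^{2}(0,L)]^{2}}+C_{1}\,\|(\eta,\omega)\|_{\mathcal B_{3}}^{2}$ and, by bilinearity, $\|\Gamma v_{1}-\Gamma v_{2}\|_{\mathcal B_{3}}\le C_{1}R\,\|v_{1}-v_{2}\|_{\mathcal B_{3}}$, with $C_{0},C_{1}$ depending on $T$. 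Taking $R=2C_{0}\theta$ and $\theta=\theta(T)$ small enough that $2C_{1}C_{0}\theta\le\tfrac12$, $\Gamma$ is a contraction of $B_{R}$; its unique fixed point is the sought solution $(\eta,\omega)\in C([0,T];X_{3})$, uniqueness follows from the Lipschitz estimate, and the final bound of the theorem is read off from $\|\Gamma(\eta,\omega)\|_{\mathcal B_{3}}\le 2C_{0}\|(\eta_{0},\omega_{0})\|_{[H^{3}\cap H_{0}^{2}(0,L)]^{2}}$. The constraint $\eta_{xx}(0)=0$ is propagated in time because it is encoded in $\mathcal D$ and is consistent with $z(\cdot,0)=\eta_{xx}(\cdot,L)$.

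\emph{Main obstacle.} The heart of the matter is the last step: one must prove that, after the rewritings above, the inhomogeneous Kato smoothing genuinely returns the $\mathcal N$-source to $\mathcal B_{3}$ with no loss of derivatives, which forces a careful account of how the (at most two) outer derivatives in $\partial_{x}^{j}(pq)$ are absorbed by the smoothing gain, of how the boundary conditions interact with the $H^{5}$ part of the norm, and of how the quadratic remainders are controlled through the $C^{2}$ embedding at the $H^{3}$ scale --- this is where essentially all the technical work lies. A secondary, more bookkeeping, difficulty is the dissipativity computation: the sharp threshold \eqref{eq:CCond2} only emerges after writing the boundary quadratic form in $\bigl(\eta_{xx}(t,L),z(t,1)\bigr)$ explicitly and invoking $\dot\tau\le d<1$. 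Once these are settled, Kato's theorem and the Banach fixed-point theorem assemble the statement.
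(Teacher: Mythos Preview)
Your overall plan is sound, and in particular your treatment of the linear part (auxiliary transport variable, time-dependent norms, Kato's theorem, dissipativity via the quadratic form in $(\eta_{xx}(t,L),z(t,1))$) matches the paper closely. The divergence is in the nonlinear step.

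\medskip

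The paper does \emph{not} use any $L^{2}(0,T;H^{5})$ smoothing or a divergence-form decomposition of $\mathcal N$. Instead, it works entirely in $C([0,T];X_{3})$ and exploits two simpler facts: (i) $H^{3}(0,L)\hookrightarrow C^{2}([0,L])$, so every nonlinear term --- including the worst one $\rho\,\omega\omega_{xxx}$ --- is estimated directly in $L^{2}(0,T;L^{2}(0,L))$ with a factor $T^{1/2}$, for instance $\|\omega\omega_{xxx}\|_{L^{2}_{t}L^{2}_{x}}\le T^{1/2}\|\omega\|_{L^{\infty}_{t,x}}\|\omega_{xxx}\|_{L^{\infty}_{t}L^{2}_{x}}\lesssim T^{1/2}\|(\eta,\omega)\|_{C([0,T];X_{3})}^{2}$; and (ii) a transposition argument (Riesz representation against the adjoint backward problem) showing that data in $X_{3}$, boundary term $f\in L^{2}(0,T)$ and sources in $L^{2}(0,T;H^{-2})$ produce solutions in $C([0,T];X_{3})$. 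The delayed feedback $f(t)=\alpha\eta_{xx}(t,L)+\beta\eta_{xx}(t-\tau(t),L)$ is then absorbed by a separate short-time contraction (again with a $T^{1/2}$ factor), after which the nonlinear fixed point runs in $C([0,T];X_{3})$ alone.

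\medskip

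What this buys the paper: no need to prove an $H^{3}\to L^{2}_{t}H^{5}$ smoothing estimate compatible with the delayed boundary condition, and no need for the $\partial_{x}^{j}(pq)$ rewriting. Your route could presumably be made to work, but the ``inhomogeneous smoothing'' step you identify as the main obstacle --- that a source $\partial_{x}^{j}g$ with $j\le 2$ and $g$ built from second derivatives lands back in $\mathcal B_{3}$ --- is genuinely nontrivial here (boundary compatibilities at the $H^{5}$ level, interaction with the delay coupling) and is precisely what the paper's transposition method circumvents. If you want to stay closer to the paper, drop the $L^{2}_{t}H^{5}$ component of $\mathcal B_{3}$, estimate $\mathcal N$ directly in $L^{2}(0,T;L^{2})$ via $H^{3}\hookrightarrow C^{2}$, and invoke the transposition lemma for the solution map into $C([0,T];X_{3})$.
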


Our second result is closely related to the total energy associated with the system \eqref{eq:KdV-KdV} that is defined in $H$ by
\begin{equation}\label{eq:En}
E(t) = \frac{1}{2} \int_0^L(  \eta^2(t,x) +  \omega^2(t,x) )\,dx
+ \frac{\lvert \beta\rvert }{2}\tau(t) \int_0^1 \eta_{xx}^2(t-\tau(t)\rho , L)\,d\rho.
\end{equation}
Indeed, the second result of the article guarantees that the energy $E(t)$ associated with the following system
\begin{equation}\label{eq:KdV-KdVaaaa}
\begin{cases}
\eta_t +  \omega_{x} + a \omega_{xxx} + a_1 \omega_{xxxxx} =0,& \text{in} \; \mathbb{R}^{+}\times(0,L), \\
\omega_t+\eta_{x} + c\eta_{xxx} + c_1\eta_{xxxxx}=-0, &\text{in} \;  \mathbb{R}^{+}\times(0,L), \\
\eta(t,0) = \eta(t,L) = \eta_x(t,0) = \eta_x(t,L) = \eta_{xx}(t,0)= 0, & t\in\mathbb{R}^{+},\\
\omega(t,0) = \omega(t,L) = \omega_x(t,0) = \omega_x(t,L) = 0, & t\in\mathbb{R}^{+},\\
\omega_{xx}(t,L) = \alpha \eta_{xx}(t,L)+\beta \eta_{xx}(t-\tau(t),L), & t>0,\\
\eta_{xx}(t-\tau(0),L) = z_0(t-\tau(0))\in L^2(0,1) , & 0<t<\tau(0), \\
\left(\eta(0,x),\omega(0,x)\right) = \left(\eta_0(x),\omega_0(x)\right)\in X_0, & x\in(0,L),
\end{cases}
\end{equation}
 decays exponentially, even in the presence of delay, and provides an estimate of the decay rate. The result is expressed as follows:
\begin{theorem}\label{th:Lyapunov0}
Let  the parameters $a, c, a_1, c_1$
verify \eqref{conditions_para} and  $0<L<\sqrt{\frac{5 a_1}{3 a}}\pi$. Suppose also that the time-dependent delay function satisfies \eqref{eq:TauCond}. Then, there exist two positive constants
\begin{equation}\label{eq:r}
\zeta = \frac{1 + \max\{\mu_1 L,\mu_2\}}{1- \max\{\mu_1 L,\mu_2\}},
\end{equation}
and
\begin{equation}\label{eq:lambda}
 \lambda \leq \min \left\lbrace
\frac{\mu_1 \pi^2\left(5 a_1\pi^2-3 a L^2\right)}{L^4(1+\mu_1L)}, \frac{\mu_2(1-d)}{M(1+\mu_2)}
\right\rbrace
\end{equation}
such that the energy $E(t)$ given by \eqref{eq:En} associated to the  system \eqref{eq:KdV-KdVaaaa} satisfies
$$
E(t) \leq \zeta E(0)e^{-\lambda t}, \quad \hbox{ for all } t \geq 0.$$
Here $\mu_1$ and $\mu_2 $ are two positive constants small enough to be well-chosen.
 \end{theorem}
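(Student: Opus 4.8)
The plan is to use the Lyapunov (energy) method: to build a functional $V$ equivalent to $E$ whose derivative along solutions of \eqref{eq:KdV-KdVaaaa} satisfies $V'\le-\lambda V$. I work with sufficiently regular solutions (the general case following by density) and set $z(t,\rho):=\eta_{xx}(t-\tau(t)\rho,L)$, so that $z(t,0)=\eta_{xx}(t,L)$, $z(t,1)=\eta_{xx}(t-\tau(t),L)$, and $\tau(t)z_t+(1-\dot\tau(t)\rho)z_\rho=0$. The first step is to compute the dissipation of $E$: multiplying the two equations of \eqref{eq:KdV-KdVaaaa} by $\eta$ and $\omega$, adding and integrating over $(0,L)$, the relations $a=c$, $a_1=c_1$ from \eqref{conditions_para} together with repeated integration by parts (using $\eta=\eta_x=\omega=\omega_x=0$ at both endpoints and $\eta_{xx}(t,0)=0$) leave only the fifth-order boundary term at $x=L$, so that $\tfrac{d}{dt}\tfrac12\int_0^L(\eta^2+\omega^2)\,dx=-a_1\eta_{xx}(t,L)\omega_{xx}(t,L)$. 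Inserting $\omega_{xx}(t,L)=\alpha\eta_{xx}(t,L)+\beta z(t,1)$, differentiating the delay part of $E$ in \eqref{eq:En} via the transport identity for $z$, and estimating the resulting mixed term $a_1\beta\,\eta_{xx}(t,L)z(t,1)$ by Young's inequality with a weight slightly below $(1-d)/a_1$, I expect to obtain (exactly under the strict inequality \eqref{eq:CCond2} and $\dot\tau\le d<1$) constants $\kappa_1,\kappa_2>0$ with $E'(t)\le-\kappa_1\eta_{xx}^2(t,L)-\kappa_2 z^2(t,1)$; in particular $E$ is non-increasing, and it is crucial to keep dissipation in \emph{both} traces.

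Next I introduce two auxiliary functionals: the spatial multiplier $V_1(t):=-\int_0^L x\,\eta(t,x)\omega(t,x)\,dx$ and the Nicaise--Pignotti-type weighted delay term $V_2(t):=\tfrac{|\beta|}{2}\tau(t)\int_0^1 e^{-2\lambda\tau(t)\rho}z^2(t,\rho)\,d\rho$. Differentiating $V_1$ along \eqref{eq:KdV-KdVaaaa} and integrating by parts, the first-order terms give $-\tfrac12\int_0^L(\eta^2+\omega^2)\,dx$, the third-order terms give $\tfrac{3a}{2}\int_0^L(\eta_x^2+\omega_x^2)\,dx$, and the fifth-order terms give $-\tfrac{5a_1}{2}\int_0^L(\eta_{xx}^2+\omega_{xx}^2)\,dx+\tfrac{a_1L}{2}(\eta_{xx}^2(t,L)+\omega_{xx}^2(t,L))$; since $\|f_x\|_{L^2(0,L)}^2\le(L^2/\pi^2)\|f_{xx}\|_{L^2(0,L)}^2$ for $f\in H_0^2(0,L)$, the third- and fifth-order contributions combine to a strictly negative multiple of $\int_0^L(\eta_{xx}^2+\omega_{xx}^2)\,dx$ exactly because $0<L<\sqrt{5a_1/(3a)}\,\pi$, i.e. $5a_1\pi^2-3aL^2>0$, yielding
$$V_1'(t)\le-\tfrac12\!\int_0^L\!(\eta^2+\omega^2)\,dx-\tfrac{5a_1\pi^2-3aL^2}{2\pi^2}\!\int_0^L\!(\eta_{xx}^2+\omega_{xx}^2)\,dx+\tfrac{a_1L}{2}\bigl(\eta_{xx}^2(t,L)+\omega_{xx}^2(t,L)\bigr).$$
A direct computation using the transport identity for $z$ gives $V_2'(t)=-2\lambda V_2(t)-\tfrac{|\beta|}{2}(1-\dot\tau(t))e^{-2\lambda\tau(t)}z^2(t,1)+\tfrac{|\beta|}{2}\eta_{xx}^2(t,L)$.

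Then I set $V(t):=\tfrac12\int_0^L(\eta^2+\omega^2)\,dx+\mu_1V_1(t)+(1+\mu_2)V_2(t)$. Since $|V_1(t)|\le\tfrac{L}{2}\int_0^L(\eta^2+\omega^2)\,dx$ and $(1-\mu_2)\le(1+\mu_2)e^{-2\lambda M}$ as soon as $\lambda\le\mu_2(1-d)/[M(1+\mu_2)]$ (so that $(1+\mu_2)V_2$ stays between $(1\pm\mu_2)$ times the delay part of $E$), one gets $|V-E|\le m\,E$ with $m:=\max\{\mu_1L,\mu_2\}$; choosing $\mu_1,\mu_2$ small so that $m<1$ yields $(1-m)E\le V\le(1+m)E$, the equivalence behind $\zeta$ in \eqref{eq:r}. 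It remains to check $V'+\lambda V\le0$. The delay-integral terms close directly, since $(1+\mu_2)V_2'$ contributes $-2\lambda(1+\mu_2)V_2$ while $\lambda V$ contributes $+\lambda(1+\mu_2)V_2$, leaving $-\lambda(1+\mu_2)V_2\le0$. Converting the interior $H^2$-dissipation of $V_1$ into $L^2$-dissipation via $\int_0^L(\eta^2+\omega^2)\le(L^4/\pi^4)\int_0^L(\eta_{xx}^2+\omega_{xx}^2)$, the coefficient of $\int_0^L(\eta^2+\omega^2)\,dx$ becomes nonpositive once $\lambda\le\mu_1\pi^2(5a_1\pi^2-3aL^2)/[L^4(1+\mu_1L)]$. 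Finally, using the first step for $\tfrac{d}{dt}\tfrac12\int_0^L(\eta^2+\omega^2)\,dx$, replacing $\omega_{xx}^2(t,L)$ by $2\alpha^2\eta_{xx}^2(t,L)+2\beta^2z^2(t,1)$ in $V_1'$ through the boundary condition, and absorbing the leftover $\eta_{xx}^2(t,L)$ and $z^2(t,1)$ boundary contributions — which is possible for $\mu_1\ll\mu_2\ll1$ precisely because \eqref{eq:CCond2} is strict and $(1+\mu_2)e^{-2\lambda M}\ge1-\mu_2$ — one obtains $V'(t)+\lambda V(t)\le0$ for $\lambda$ as in \eqref{eq:lambda}. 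Gr\"onwall's inequality and the equivalence then give $E(t)\le\tfrac{1}{1-m}V(0)e^{-\lambda t}\le\tfrac{1+m}{1-m}E(0)e^{-\lambda t}=\zeta E(0)e^{-\lambda t}$.

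The main obstacle is the first step together with the final absorption: one must carry out the fifth-order integrations by parts while tracking every $x=L$ boundary term, and — more delicately — choose the weight in Young's inequality so as to preserve strictly positive dissipation simultaneously in $\eta_{xx}^2(t,L)$ \emph{and} in $z^2(t,1)=\eta_{xx}^2(t-\tau(t),L)$, since the feedback law turns the $\omega_{xx}^2(t,L)$ produced by $V_1'$ into a combination of the two that must be killed in the last step; the exact value of that weight is what forces precisely the gain condition \eqref{eq:CCond2}. Handling the time dependence of $\tau$ (the factors $1-\dot\tau$ and $\tau\le M$ from \eqref{eq:TauCond}) inside $V_2$ is the second delicate point, and matching exactly the constants $\zeta$, $\lambda$ in \eqref{eq:r}--\eqref{eq:lambda} is then careful but routine bookkeeping. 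Throughout, the traces $\eta_{xx}(t,L)$ and $z(t,1)$ make sense thanks to the regularity provided by the well-posedness theory.
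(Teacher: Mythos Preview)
Your proposal is correct and follows essentially the same Lyapunov strategy as the paper: the functional $V$ combines the energy, the $x$-multiplier $\int_0^L x\,\eta\omega\,dx$, and a weighted delay integral, and the proof closes by showing $V'+\lambda V\le0$ together with the equivalence $(1-m)E\le V\le(1+m)E$, $m=\max\{\mu_1L,\mu_2\}$. Two minor technical differences are worth noting. First, for the delay functional the paper uses the simpler linear weight $(1-\rho)$, i.e.\ $V_2(t)=\tfrac{|\beta|}{2}\tau(t)\int_0^1(1-\rho)z^2(t,\rho)\,d\rho$, while you use the exponential weight $e^{-2\lambda\tau(t)\rho}$; your choice makes the delay-integral part of $V'+\lambda V$ close automatically (the $-2\lambda V_2$ absorbs $+\lambda V_2$), at the price of a slightly less direct equivalence with $E$, whereas the paper handles this in a separate estimate $S_3<0$ under $\lambda<\mu_2(1-d)/[M(1+\mu_2)]$. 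Second, for the boundary traces at $x=L$ the paper keeps the quadratic form intact and argues that the perturbed matrix $\Psi_{\mu_1,\mu_2}=\Phi_{\alpha,\beta}+\tfrac{a_1L\mu_1}{2}\bigl(\begin{smallmatrix}\alpha^2+1&\alpha\beta\\ \alpha\beta&\beta^2\end{smallmatrix}\bigr)+\tfrac{|\beta|\mu_2}{2}\bigl(\begin{smallmatrix}1&0\\0&0\end{smallmatrix}\bigr)$ remains negative definite for small $\mu_1,\mu_2$ by continuity, which is cleaner than your Young-inequality-plus-absorption route but yields the same conclusion under \eqref{eq:CCond2}. Either variant produces the constants \eqref{eq:r}--\eqref{eq:lambda}.
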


The main contribution of this work is to establish the local well-posedness of system \eqref{eq:KdV-KdVa} and to prove the exponential stability of system \eqref{eq:KdV-KdVaaaa}. These results extend and refine those obtained in \cite{Boumediene2024} and \cite{Parada2022} in several significant directions. More specifically, unlike \cite{Boumediene2024}, where the nonlinear coupling appears only in one equation through the term $(\eta \omega)_x$, our model introduces an additional nonlinear coupling term of the form $(\eta \omega_{xx})_x$. Furthermore, while \cite{Boumediene2024} features a single uncoupled nonlinear term, our system includes four additional uncoupled nonlinear terms of higher order, making the analysis more intricate and requiring careful handling of the extra terms in the computations. In particular, in our case, the higher-order spatial derivatives (of order three and five) appear with positive signs, leading to conflicts between the $H^1$ and $H^2$-norm terms during integration by parts. Compared to \cite{Parada2022}, the situation is even more complex: the problem in \cite{Parada2022} involves a single uncoupled equation with only one nonlinearity and a highest-order derivative of three. Finally, and importantly, in contrast to \cite{Boumediene2024, Parada2022}, we employ the transposition method \cite{CF2019} to address the well-posedness of our system.

\subsection{Outline} The structure of the paper is as follows. In Section \ref{sec2}, we establish the well-posedness of the nonlinear problem \eqref{eq:KdV-KdV}, namely, we show Theorem \ref{nonlinearT} starting with an analysis of the linear system \eqref{eq:KdV-KdVaaaa} using the \textit{variable norm technique of Kato}, followed by the application of the Fixed-Point Theorem to prove well-posedness for the full nonlinear problem. Section \ref{sec3} focuses on the stability result presented in Theorem~\ref{th:Lyapunov0}, along with a discussion of the optimal decay rate. Finally, we conclude the paper with further remarks in Section \ref{sec4}.

\section{Well-posedness results}\label{sec2}
In the sequel, we will assume \( a = c > 0 \) and \( a_1 = c_1 > 0 \) in  \eqref{eq:KdV-KdV} as well as in \eqref{eq:KdV-KdVaaaa}. We will first examine the well-posedness of the linear system \eqref{eq:KdV-KdVaaaa} and subsequently analyze the properties of the nonlinear problem \eqref{eq:KdV-KdV} in suitable spaces.
\subsection{Linear problem}
Consider the following linear Cauchy problem
\begin{equation}\label{eq:Cauchy}
\begin{cases}
\dfrac{d}{dt} U(t) = A(t)U(t),& t>0,\\
 U(0) = U_0, & t>0,
\end{cases}
\end{equation}
where $A(t)\colon D(A(t))\subset {H}\to {H}$ is densely defined. If $D(A(t))$ is independent of time $t$, i.e., $D(A(t)) = D(A(0)),$ for $t > 0.$ The next theorem ensures the existence and uniqueness of the Cauchy problem \eqref{eq:Cauchy}.
\begin{theorem}[\cite{Kato1970}]\label{th:KatoCauchy}
Assume that:
\begin{enumerate}
\item $\mathcal{Z} = D(A(0))$ is a dense subset of $H$ and $ D(A(t)) = D(A(0))$, for all $t > 0$,
\item 
$A(t)$ generates a strongly continuous semigroup on $H$. Moreover, the family $\{A(t) \colon t\in [0, T ]\}$ is stable with stability constants $C, \ m$ independent of $t$.
\item $\partial_t A(t)$ belongs to $L_{\ast}^\infty([0, T ], B(\mathcal{Z}, H))$, the space of equivalent classes of essentially bounded, strongly measure functions from $[0, T ]$ into the set $B(\mathcal{Z}, H)$ of bounded operators from $\mathcal{Z}$ into $H$.
\end{enumerate}
Then, problem~\eqref{eq:Cauchy} has a unique solution $U \in C([0, T ], \mathcal{Z}) \cap C^1 ([0, T ], H)$ for any initial datum in $\mathcal{Z}$.
\end{theorem}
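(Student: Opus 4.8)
The plan is to construct the solution operator, i.e.\ an evolution family $\{U(t,s)\}_{0\le s\le t\le T}$ on $H$, by freezing the operator $A(t)$ on short time intervals, and then to identify $t\mapsto U(t,0)U_0$ as the unique solution. Since the domain $\mathcal{Z}=D(A(t))$ is time-independent and each $A(t)$ generates a $C_0$-semigroup $\{e^{rA(t)}\}_{r\ge0}$ by hypothesis (2), the natural approximation is piecewise constant in time. I would fix a sequence of partitions $0=t_0^n<t_1^n<\cdots<t_{k_n}^n=T$ with mesh tending to zero, set $A_n(r)=A(t_j^n)$ for $r\in[t_j^n,t_{j+1}^n)$, and define the approximate propagator as the ordered product of the corresponding semigroups,
\begin{equation*}
U_n(t,s)=e^{(t-t_k^n)A(t_k^n)}\,e^{(t_k^n-t_{k-1}^n)A(t_{k-1}^n)}\cdots e^{(t_{j+1}^n-s)A(t_j^n)},
\end{equation*}
for $s\in[t_j^n,t_{j+1}^n)$ and $t\in[t_k^n,t_{k+1}^n)$. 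The stability assumption in hypothesis (2), which by definition controls arbitrary ordered products of the individual semigroups, yields at once the uniform bound $\norm{U_n(t,s)}_{B(H)}\le C\,e^{m(t-s)}$, with $C$ and $m$ independent of $n$, $s$, and $t$.

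The central analytic step is to prove that $\{U_n\}$ is Cauchy in the strong operator topology. For $\phi\in\mathcal{Z}$ the variation-of-parameters identity
\begin{equation*}
U_n(t,s)\phi-U_m(t,s)\phi=\int_s^t U_n(t,r)\bigl[A_m(r)-A_n(r)\bigr]U_m(r,s)\phi\,dr
\end{equation*}
reduces everything to estimating the integrand. Hypothesis (3) forces $A$ to be Lipschitz from $[0,T]$ into $B(\mathcal{Z},H)$, so $\norm{A_n(r)-A_m(r)}_{B(\mathcal{Z},H)}$ is bounded by the sum of the two mesh sizes and tends to zero; combined with the uniform stability bound, this gives convergence $U_n(t,s)\phi\to U(t,s)\phi$, uniformly in $(s,t)$, first on $\mathcal{Z}$ and then, by density and the uniform bound, on all of $H$. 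The limit inherits $U(t,t)=I$, the evolution property $U(t,r)U(r,s)=U(t,s)$, and strong continuity, and is the candidate evolution family.

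The main obstacle, and the place where all three hypotheses are simultaneously needed, is the regularity of the constructed solution: one must show that $U(t,s)$ maps $\mathcal{Z}$ into $\mathcal{Z}$ and that, for $U_0\in\mathcal{Z}$, the map $t\mapsto U(t,0)U_0$ lies in $C([0,T],\mathcal{Z})\cap C^1([0,T],H)$ with $\tfrac{d}{dt}U(t,0)U_0=A(t)U(t,0)U_0$. The difficulty is that the Cauchy estimate above already presupposes that $U_m(r,s)\phi$ stays bounded in the graph norm of $\mathcal{Z}$; securing this uniform $\mathcal{Z}$-bound is exactly the admissibility (invariance) of $\mathcal{Z}$ under the frozen semigroups, which for this class is part of what the stability hypothesis encodes. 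Granting it, differentiability is obtained by differentiating the exact relation $\partial_t U_n(t,s)=A_n(t)U_n(t,s)$ and passing to the limit, the error being controlled by hypothesis (3). Finally, uniqueness follows by a duality argument: if $V$ solves the homogeneous problem with $V(0)=0$, then for fixed $t$ the map $r\mapsto U(t,r)V(r)$ has vanishing derivative by the product rule and the equation, hence is constant, forcing $V(t)=U(t,0)V(0)=0$.
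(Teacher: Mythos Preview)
The paper does not prove this statement at all: Theorem~\ref{th:KatoCauchy} is quoted verbatim from Kato's 1970 paper (reference \cite{Kato1970}) and is used as a black box to establish the subsequent well-posedness result (Theorem~\ref{well-lin}). There is therefore no ``paper's own proof'' to compare your proposal against.

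That said, your sketch does follow the classical Kato construction---piecewise-constant freezing of $A(t)$, ordered products of the frozen semigroups, stability to control the products uniformly, a variation-of-parameters identity to prove strong convergence, and a duality argument for uniqueness. You correctly flag the delicate point: the Cauchy estimate for $U_n-U_m$ requires a uniform $\mathcal{Z}$-bound on $U_m(r,s)\phi$, i.e.\ invariance of $\mathcal{Z}$ under the approximate propagators with a bound independent of the partition. However, your remark that this ``is part of what the stability hypothesis encodes'' is not quite right. Stability in Kato's sense is a statement about products in $B(H)$ only; $\mathcal{Z}$-invariance and the uniform $\mathcal{Z}$-bound are a separate ingredient, obtained in Kato's framework by showing that the family is also stable in $\mathcal{Z}$ (equipped with the graph norm), which in turn uses the time-independence of the domain together with hypothesis~(3). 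Without making that second stability explicit, the regularity step does not go through. So as a self-contained proof your outline has a genuine gap at precisely the place you identify; as a summary of the strategy it is accurate.
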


The task ahead is to apply the above result to ensure the existence of solutions for the linear system \eqref{eq:KdV-KdVaaaa}. Arguing as in \cite{xyl} and \cite{Nicaise2006, Nicaise2009}, let us define the auxiliary variable
$$z(t,\rho) = \eta_{xx}(t- \tau(t)\rho,L),$$
which satisfies the transport equation:
\begin{equation}\label{eq:tr}
\begin{cases}
\tau(t)z_t(t,\rho) + (1-\dot\tau(t)\rho)z_\rho(t,\rho) = 0, &t>0, \rho \in(0,1), \\
z(t,0) = \eta_{xx}(t,L), \ z(0,\rho) = z_0(-\tau(0)\rho), & t>0, \  \rho \in (0,1).
\end{cases}
\end{equation}

Now, we pick up $U = (\eta,\omega; z)^{T}$ and consider the time-dependent operator
$$A(t)\colon D(A(t))\subset {H}\to {H}$$ given by
\begin{equation}\label{eq:A}
A(t)\left(\eta, \omega , z\right) :=\left(-\omega_x-a\omega_{xxx}-a_1 \omega_{xxxxx}, -\eta_x-a\eta_{xxx}-a_1 \eta_{xxxxx}, \dfrac{\dot\tau(t)\rho -1}{\tau(t)} z_\rho\right),
\end{equation}
with a domain defined by
\begin{equation}\label{DAA}
D(A(t))=
\left\lbrace
	\begin{aligned}
		(\eta,\omega,z) \in H; \, & (\eta,\omega)\in \left[ H^5(0,L)\cap H_0^2(0,L) \right]^2, \;z  \in H^1(0,1), \\
	&\eta_{xx}(0)=0, z(0) = \eta_{xx}(L),  \omega_{xx}(L) = \alpha \eta_{xx}(L) + \beta z(1)
	\end{aligned}
\right\rbrace.
\end{equation}
This allows us to write the problem \eqref{eq:KdV-KdVaaaa} in the abstract form  \eqref{eq:Cauchy} using \eqref{eq:tr}-\eqref{DAA}. Additionally, it is noteworthy that $D(A(t))$ is independent of time $t$ since $D(A(t)) = D(A(0))$.

Now, taking the triplet $\lbrace A, H, \mathcal{Z}\rbrace$, with $A = \left\lbrace A(t)\colon t\in[0,T] \right\rbrace$, for some $T>0$ fixed and $\mathcal{Z} = D(A(0))$, we can state and prove the well-posedness result of \eqref{eq:Cauchy} related to $\lbrace A, H, \mathcal{Z}\rbrace$.
\begin{theorem} \label{well-lin}
Let the parameters $a, c, a_1, c_1$
verify \eqref{conditions_para}. Assume that $\alpha$ and $\beta$ are real constants such that  \eqref{matrix_nega_def_albe} holds. Taking $U_0 \in H$,  there exists a unique solution $U \in C([0, +\infty), H)$ to~\eqref{eq:Cauchy} whose operator is defined by \eqref{eq:A}-\eqref{DAA}. Moreover, if $U_0  \in D(A(0))$, then $U \in C([0, +\infty), D(A(0)))\cap C^1([0, +\infty), H).$
\end{theorem}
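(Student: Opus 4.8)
The plan is to verify the three hypotheses of Theorem~\ref{th:KatoCauchy} for the triple $\{A,H,\mathcal{Z}\}$, with $\mathcal{Z}=D(A(0))$, and then to glue the resulting local-in-time solutions into a global one. Hypothesis (1) is essentially at hand: by the very definition \eqref{DAA} one has $D(A(t))=D(A(0))=\mathcal{Z}$ for every $t\ge 0$, and $\mathcal{Z}$ is dense in $H$ because it contains $[C_0^\infty(0,L)]^2\times C_0^\infty(0,1)$ (for such triples every trace condition in \eqref{DAA} holds trivially), which is dense in $H=L^2(0,L)^2\times L^2(0,1)$. The substantive work lies in hypothesis (2): quasi-dissipativity of each $A(t)$, the range condition, and the stability of the family $\{A(t):t\in[0,T]\}$ with constants independent of $t$.

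\textbf{Quasi-dissipativity and equivalence of the norms.} For $U=(\eta,\omega,z)\in\mathcal{Z}$ I would compute $\langle A(t)U,U\rangle_t$ directly. Integrating by parts in the first two components and using the homogeneous conditions built into $\mathcal{Z}$, all boundary contributions from the first- and third-order terms cancel, while the fifth-order terms leave only $-a_1\,\omega_{xx}(L)\,\eta_{xx}(L)$ (here the hypothesis $a=c$, $a_1=c_1$ is used so that the two equations combine cleanly). Substituting the boundary law $\omega_{xx}(L)=\alpha\,\eta_{xx}(L)+\beta\,z(1)$ together with the compatibility identity $z(0)=\eta_{xx}(L)$, and integrating the transport term $|\beta|\int_0^1(\dot\tau(t)\rho-1)z_\rho z\,d\rho$ once by parts in $\rho$, gives
\begin{equation*}
\langle A(t)U,U\rangle_t=\left(\tfrac{|\beta|}{2}-a_1\alpha\right)z(0)^2-a_1\beta\,z(0)z(1)+\tfrac{|\beta|}{2}\,(\dot\tau(t)-1)\,z(1)^2-\tfrac{|\beta|\,\dot\tau(t)}{2}\int_0^1 z(\rho)^2\,d\rho .
\end{equation*}
Bounding the cross term by Young's inequality with weight $\varepsilon=a_1/(1-d)$ and using $\dot\tau(t)\le d$, the first three terms constitute a quadratic form in $(z(0),z(1))$ that is negative semidefinite precisely under condition \eqref{eq:CCond2} --- this is exactly where that hypothesis is invoked. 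The last term is dominated by $\tfrac{\|\dot\tau\|_{L^\infty}}{2\tau(0)}\|U\|_t^2$ since $\tau(t)\ge\tau(0)>0$, so $A(t)-\kappa I$ is dissipative for the norm $\|\cdot\|_t$, uniformly in $t$, with $\kappa=\tfrac{\|\dot\tau\|_{L^\infty}}{2\tau(0)}$. Finally, $\|U\|_t^2$ and $\|U\|_s^2$ differ only through the factor $|\beta|\tau(\cdot)$ in front of $\|z\|_{L^2(0,1)}^2$, and $|\tau(t)/\tau(s)-1|\le\tau(0)^{-1}\|\dot\tau\|_{L^\infty}|t-s|$, whence $\|U\|_t\le e^{c|t-s|}\|U\|_s$ for a constant $c$ independent of $U,t,s$; together with the quasi-dissipativity this yields the stability of the family with $t$-independent constants, as required by Kato's framework.

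\textbf{The range condition.} It remains to show that $\lambda I-A(t)$ maps $\mathcal{Z}$ onto $H$ for some $\lambda>\kappa$; combined with the quasi-dissipativity, the Lumer--Phillips theorem then gives that each $A(t)$ generates a strongly continuous semigroup on $H$. Given $F=(f_1,f_2,f_3)\in H$, the third component of $(\lambda I-A(t))U=F$ is a first-order linear ODE in $\rho$ with bounded coefficient (the denominator $1-\dot\tau(t)\rho$ is bounded below by $1-d>0$), which I solve explicitly with $z(0)=\eta_{xx}(L)$; this expresses $z(1)=c_1(t)\,\eta_{xx}(L)+(\text{explicit linear functional of }f_3)$ with $c_1(t)\in(0,1)$. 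Inserting this into the boundary law converts the coupling in \eqref{DAA} into a Robin-type condition $\omega_{xx}(L)=(\alpha+\beta c_1(t))\,\eta_{xx}(L)+(\text{term in }f_3)$; and since \eqref{eq:CCond2} forces $\alpha>|\beta|\ge|\beta c_1(t)|$, the modified coefficient $\alpha+\beta c_1(t)$ remains positive. The two remaining equations for $(\eta,\omega)$ then form a coupled fifth-order boundary value problem, which I would solve variationally in $[H_0^2(0,L)]^2$: the associated bilinear form is coercive for $\lambda$ large (the coercivity coming from the $\lambda$-term together with the good sign of the boundary coefficient, again by \eqref{eq:CCond2}), so Lax--Milgram furnishes a weak solution; the constant-coefficient structure of the system bootstraps it to $(\eta,\omega)\in[H^5(0,L)]^2$ satisfying the natural boundary conditions $\eta_{xx}(0)=0$ and $\omega_{xx}(L)=\alpha\eta_{xx}(L)+\beta z(1)$, and reconstructing $z$ from the transport solution produces $U\in\mathcal{Z}$ with $(\lambda I-A(t))U=F$. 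This surjectivity step is the part I expect to be the main obstacle: one must carefully track every boundary term generated by the fifth-order operators, verify the coercivity of the variational form with the perturbed boundary coefficient, and confirm that the weak solution has the full regularity needed to lie in $\mathcal{Z}$.

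\textbf{Time regularity of $A(t)$ and conclusion.} Hypothesis (3) is straightforward: $\partial_t A(t)$ acts only on the third component, multiplying $z_\rho$ by
$$\partial_t\left(\frac{\dot\tau(t)\rho-1}{\tau(t)}\right)=\frac{\ddot\tau(t)\rho\,\tau(t)-(\dot\tau(t)\rho-1)\dot\tau(t)}{\tau(t)^2},$$
which is essentially bounded on $[0,T]\times(0,1)$ because $\tau\in W^{2,\infty}([0,T])$ and $\tau\ge\tau(0)>0$; since $\|z_\rho\|_{L^2(0,1)}\le\|U\|_{\mathcal{Z}}$, this shows $\partial_t A(t)\in L_{\ast}^\infty([0,T],B(\mathcal{Z},H))$. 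Theorem~\ref{th:KatoCauchy} now yields, for each fixed $T>0$, a unique solution $U\in C([0,T],H)$, and $U\in C([0,T],\mathcal{Z})\cap C^1([0,T],H)$ whenever $U_0\in\mathcal{Z}$. Since $T>0$ is arbitrary and the constants above are locally uniform in time, the solutions on different intervals coincide on overlaps and patch together into a unique solution on $[0,+\infty)$ with the stated regularity, which is the assertion of Theorem~\ref{well-lin}.
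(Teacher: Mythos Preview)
Your overall architecture matches the paper's: both proofs verify the three hypotheses of Kato's theorem, and your treatment of hypotheses (1) and (3) is essentially identical to the paper's. For hypothesis (2), your dissipativity computation is correct and equivalent to the paper's---you bound the quadratic form in $(z(0),z(1))$ by Young's inequality, whereas the paper writes it directly as $\tfrac12\,X^T\Phi_{\alpha,\beta}X$ and checks that $\Phi_{\alpha,\beta}$ is negative definite under \eqref{eq:CCond2}; both arguments encode the same algebraic condition. Your explicit verification of the norm equivalence $\|\cdot\|_t\le e^{c|t-s|}\|\cdot\|_s$ (needed for Kato's stability) is a point the paper leaves implicit, so you are in fact more careful there. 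Your constant $\kappa=\|\dot\tau\|_{L^\infty}/(2\tau(0))$ differs from the paper's time-dependent $\kappa(t)=(\dot\tau(t)^2+1)^{1/2}/(2\tau(t))$, but either choice works.

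The genuine gap is in the range condition. You propose to solve the coupled fifth-order boundary value problem ``variationally in $[H_0^2(0,L)]^2$'' via Lax--Milgram, but for an odd-order operator like $\partial_x^5$ there is no continuous, coercive bilinear form on $[H_0^2]^2\times[H_0^2]^2$: after two integrations by parts the term $a_1\int_0^L\omega_{xxxxx}\tilde\eta\,dx$ becomes $a_1\int_0^L\omega_{xxx}\tilde\eta_{xx}\,dx$, which still requires $\omega\in H^3$; one more integration throws a third derivative onto $\tilde\eta$ instead. No symmetric splitting of five derivatives between trial and test functions in $H^2$ is possible, so neither continuity nor coercivity can be formulated in the space you name. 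The paper sidesteps this by a different mechanism: after solving the transport equation and translating $\omega$ to absorb the inhomogeneous boundary datum (reducing to $\omega_{xx}(L)=\tilde\alpha\,\eta_{xx}(L)$ with $\tilde\alpha=\alpha+\beta g_0(t)>0$), it observes that the resulting operator $\hat A$ \emph{and its adjoint} $\hat A^*$ are both dissipative on $X_0$, and then invokes Lumer--Phillips directly to conclude that $\lambda I-\hat A$ is surjective. This adjoint-dissipativity route is the standard one for KdV/Kawahara-type operators precisely because the variational approach fails; you should replace your Lax--Milgram step with it.
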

\begin{proof}
The result will be proved in a standard way (see, for instance, \cite{Nicaise2009}). First, it is not difficult to see that $\mathcal{Z} = D(A(0))$ is a dense subset of $H$ and $D(A(t))=D(A(0))$, for all $t>0$. Thus, the requirement (1) of Theorem \ref{th:KatoCauchy} is fulfilled.

Concerning the condition (2) of Theorem~\ref{th:KatoCauchy}, let us note that simple integrations by parts together with the boundary conditions yield
\begin{equation*}
\left\langle A(t) U, U \right\rangle_t - \kappa(t) \left\langle U, U \right\rangle_t
\leq \frac{1}{2} \left(\eta_{xx}(t, L), \eta_{xx}(t-\tau(t),L) \right)
\Phi_{\alpha,\beta}
\left(	\eta_{xx}(t, L), \eta_{xx}(t-\tau(t),L) \right)^T,
\end{equation*}
where
\begin{equation*}
\kappa(t) = \frac{(\dot\tau(t)^2+1)^{\frac12}}{2\tau(t)}. 
\end{equation*}
Owing to \eqref{matrix_nega_def_albe}, it follows that
$$\left\langle A(t) U, U \right\rangle_t - \kappa(t) \left\langle U,U \right\rangle_t \leq 0.$$
Consequently, $\tilde{A}(t) = A(t) - \kappa(t)I$ is dissipative.

Now, we claim the following:
\begin{claim}\label{CL1}
For all $t\in[0,T]$, the operator $A(t)$ is maximal, or equivalently, we have that $\lambda I - A(t)$ is surjective,  for some  $\lambda > 0$.
\end{claim}

In fact, let us fix $t\in[0,T]$. Given $(f_1,f_2, h)^T \in H$, we seek a solution $U = (\eta,\omega, z )^T \in D(A(t))$  of the equation $(\lambda I - A(t))U = (f_1, f_2, h)$, that is,
\begin{equation}\label{eq:WP_ewz}
\begin{cases}
\lambda\eta + \omega_x + a\omega_{xxx} + a_1\omega_{xxxxx} = f_1, \\
\lambda\omega + \eta_x + a\eta_{xxx}+ a_1\eta_{xxxxx} = f_2, \\
\lambda z + \left(\dfrac{1-\dot\tau(t)\rho}{\tau(t)}\right) z_\rho = h,\\
\eta(0) = \eta(L) = \eta_x(0) = \eta_x(L) = \eta_{xx}(0)=0, \\
\omega(0) = \omega(L) = \omega_x(0) = \omega_x(L) =0, \\
\omega_{xx}(L) = \alpha \eta_{xx}(L) + \beta z(1), z(0) = \eta_{xx}(L).
\end{cases}
\end{equation}
One can readily verify that $z$ is given by
\begin{equation*}
z(\rho) =
\begin{cases}\displaystyle
\eta_{xx}(L) e^{-\lambda\tau(t)\rho} +\displaystyle \tau(t) e^{-\lambda\tau(t)\rho} \int_0^\rho e^{\lambda\tau(t)\sigma} h (\sigma)\,d\sigma, &\!\!\!\text{if }\dot\tau(t) = 0, \\[1mm]
\\
\displaystyle
e^{\lambda\frac{\tau(t)}{\dot\tau(t)}\ln(1-\dot\tau(t)\rho)} \left[ \eta_{xx}(L)
\displaystyle+
\int_0^\rho \frac{h(\sigma)\tau(t)}{1-\dot\tau(t)\sigma}  e^{-\lambda\frac{\tau(t)}{\dot\tau(t)}\ln(1-\dot\tau(t)\sigma)}\,d\sigma \right], &\!\!\! \text{if }\dot\tau(t) \neq 0.
\end{cases}
\end{equation*}
Thereby, $z(1) = \eta_{xx}(L) g_0(t) + g_h(t)$, in which
\begin{equation*}
g_0(t) =
\begin{cases}\displaystyle
e^{-\lambda\tau(t)}, &\text { if } \dot\tau(t) = 0,\\ \displaystyle
e^{\lambda\frac{\tau(t)}{\dot\tau(t)}\ln(1-\dot\tau(t))}, & \text { if }\dot\tau(t) \neq 0,
\end{cases}
\end{equation*}
and
\begin{equation*}
g_h(t)=
\begin{cases}\displaystyle
 \tau(t) e^{-\lambda \tau(t)} \int_0^1 e^{\lambda \tau(t) \sigma} h(\sigma) d \sigma, & \text { if } \dot{\tau}(t)=0, \\
 \\
 \displaystyle
e^{\lambda \frac{\tau(t)}{\dot\tau(t)} \ln(1-\dot{\tau}(t))} \int_0^1 \frac{h(\sigma) \tau(t)}{1-\dot{\tau}(t) \sigma} e^{-\lambda \frac{\tau(t)}{\dot{\tau}(t)} \ln (1-\dot{\tau}(t) \sigma)} d \sigma, & \text { if } \dot{\tau}(t) \neq 0.
\end{cases}
\end{equation*}
Combining the latter with \eqref{eq:WP_ewz}, it follows that $\eta$ and $\omega$ are solutions of the system
\begin{equation}\label{eq:WP_ew1}
\begin{cases}
\lambda\eta + \omega_x +  a\omega_{xxx} + a_1\omega_{xxxxx} = f_1, \\
\lambda\omega + \eta_x + a\eta_{xxx}+ a_1\eta_{xxxxx} = f_2,
\end{cases}
\end{equation}
and satisfy the boundary conditions
\begin{equation*}
\begin{cases}
\eta(0) = \eta(L) = \eta_x(0) = \eta_x(L) = \eta_{xx}(0)=0, \\
\omega(0) = \omega(L) = \omega_x(0) = \omega_x(L) =0,\\
\omega_{xx}(L) = (\alpha+\beta g_0(t)) \eta_{xx}(L) + \beta g_h(t).
\end{cases}
\end{equation*}
Now, let $ \phi_1 \in C^{\infty}(\left[ 0, L \right])  $ be a function such that $ \phi_1(0)=\phi_{1}(L)=\phi_{1,x}(0)=\phi_{1,x}(L)=0 $ and $ \phi_{1,xx}(L)= 1$. Next, we define a function $\psi(x,\cdot) = \phi_1(x)\beta g_h(\cdot) \in C^\infty([0,L])$ and let $\hat{\omega}:=\omega - \psi$. This, together with \eqref{eq:WP_ew1}, implies that  $\eta$ and $\omega$ satisfy
\begin{equation*}
\begin{cases}
\lambda\eta + \omega_x +a \omega_{xxx}+a_1\omega_{xxxxx} = f_1-\left(\psi_x+\psi_{xxx}+\psi_{xxxxx}\right) =: \tilde{f_1}, \\
\lambda\omega + \eta_x + a\eta_{xxx}+a_1\eta_{xxxxx} = f_2- \lambda\psi =: \tilde{f_2},
\end{cases}
\end{equation*}
as well as the boundary conditions
\begin{equation*}
\begin{cases}
\eta(0) = \eta(L) = \eta_x(0) = \eta_x(L) = \eta_{xx}(0)=0, \\
\omega(0) = \omega(L) = \omega_x(0) = \omega_x(L) =0,\\
\omega_{xx}(L) = (\alpha+\beta g_0(t)) \eta_{xx}(L).
\end{cases}
\end{equation*}

Let us mention that for the sake of simplicity, we still use $\omega$ after translation. Then, we can verify that $0<g_0(t)<1$ (see, for instance, \cite{Boumediene2024}). Thus, thanks to \eqref {matrix_nega_def_albe}, we deduce  that  $\tilde\alpha := \alpha + \beta g_0(t) > 0$. Consequently, showing the Claim \ref{CL1} is equivalent to proving that $\lambda I - \hat{A}$ is surjective, where $\hat{A}$ is given by
$$
\hat{A} (\eta, \omega) = (-\omega_x-a\omega_{xxx}-a_1 \omega_{xxxxx}, -\eta_x-a\eta_{xxx}-a_1 \eta_{xxxxx}),
$$
with a dense domain
$$
D(\hat{A}) := \left\lbrace
(\eta,\omega)\in \left[H^5(0,L)\cap H_0^2(0,L)\right]^2
\colon
\eta_{xx}(0) = 0,\ \omega_{xx}(L) = \tilde\alpha \eta_{xx}(L)
\right\rbrace \subset X_0.
$$
Now, observe that adjoint of $\hat{A}$, denoted by $\hat{A}^\ast$, is defined by
$$
\hat{A}^\ast (u, v) = (u_x+a u_{xxx}+a_1 u_{xxxxx}, v_x+av_{xxx}+a_1 v_{xxxxx}),
$$with
$$
D(\hat{A}^\ast) := \left\lbrace
(u,v)\in \left[H^5(0,L)\cap H_0^2(0,L)\right]^2
\colon
v_{xx}(0) = 0,\ u_{xx}(L) = -\tilde\alpha v_{xx}(L)
\right\rbrace.
$$
Since
\begin{equation*}
\left\langle  \hat{A} (\eta, \omega),\left(\eta, \omega \right)
\right\rangle_{X_0} = -a_1 \tilde\alpha \eta^2_{xx}(L),
\end{equation*}
and
\begin{equation*}
 \left\langle  \hat{A}^\ast (u, v),\left(u, v  \right)
\right\rangle_{X_0} = -a_1 \tilde\alpha v^2_{xx}(L),
\end{equation*}
we can show that the operators $\hat{A}$ and $\hat{A}^\ast$ are dissipative. Therefore, the desired result follows from the Lummer-Phillips Theorem (see, for example, \cite{Pazy}). This shows the Claim \ref{CL1}. Consequently,  $\tilde{A}(t)$ generates a strongly semigroup on $H$ and $\tilde{A} = \{\tilde{A}(t), t \in [0, T ]\}$ is a stable family of generators in $H$, whose stability constant is independent of $t$. Thus, the condition (2) of Theorem \ref{th:KatoCauchy} is satisfied.

Lastly, since $\tau \in W^{2,\infty}([0, T ])$ for all $T>0$, we reach that
$$
\dot{\kappa}(t)=\frac{\ddot{\tau}(t) \dot{\tau}(t)}{2 \tau(t)\left(\dot{\tau}(t)^2+1\right)^{1 / 2}}-\frac{\dot{\tau}(t)\left(\dot{\tau}(t)^2+1\right)^{1 / 2}}{2 \tau(t)^2}
$$
is bounded on $[0, T]$ for all $T>0$ and
\begin{equation*}
\frac{d}{dt} A(t) U=\left(0, 0 ,\frac{\ddot{\tau}(t) \tau(t) \rho-\dot{\tau}(t)(\dot{\tau}(t) \rho-1)}{\tau(t)^2} z_\rho\right).
\end{equation*}
Moreover, the coefficient of $z_\rho$ is bounded on $[0, T]$, and the regularity (3) of Theorem~\ref{th:KatoCauchy} is satisfied.

To sum up, we verified the assumptions of Theorem~\ref{th:KatoCauchy} and hence  for each $U_0 \in D({A}(0))$, the Cauchy problem
\begin{equation*}
\begin{cases}
\tilde{U}_t(t)=\tilde{A}(t) \tilde{U}(t), &t>0,\\
 \tilde{U}(0)=U_0,
\end{cases}
\end{equation*}
has a unique solution $\tilde{U} \in C([0, \infty), H)$ and $\tilde{U} \in C([0, \infty), D({A}(0))) \cap C^1([0, \infty), H)$. Thus, the solution of~\eqref{eq:Cauchy} is explicitly given by $U(t)=e^{\int_0^t \kappa(s) d s} \tilde{U}(t)$.
\end{proof}

We also have the following result.
\begin{proposition}\label{pr:Diss}
Let the parameters $a, c, a_1, c_1$
verify \eqref{conditions_para}. Suppose $\alpha$ and $\beta$ are real constants such that~\eqref{matrix_nega_def_albe} holds. Then, for any mild solution of~\eqref{eq:Cauchy}, the energy $E(t)$ defined by~\eqref{eq:En} is non-increasing and
\begin{equation}\label{eq:EnDiss3}
\frac{d}{dt}E(t)
 =
 \frac{1}{2} \begin{pmatrix}
\eta_{xx}(t, L) \\ \eta_{xx}(t-\tau(t),L)
\end{pmatrix}^{T}
\Phi_{\alpha,\beta}
\begin{pmatrix}
\eta_{xx}(t, L) \\ \eta_{xx}(t-\tau(t),L)
\end{pmatrix},
\end{equation}where the matrix $\Phi_{\alpha,\beta}$ is given by
 \eqref{matrix_nega_def_albe}.
\end{proposition}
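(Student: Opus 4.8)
The plan is to establish the differential identity \eqref{eq:EnDiss3} first for \emph{strong} solutions, i.e.\ for $U_0\in D(A(0))$, for which Theorem~\ref{well-lin} yields $U=(\eta,\omega;z)\in C([0,\infty),D(A(0)))\cap C^1([0,\infty),H)$; then $t\mapsto E(t)$ is $C^1$ and all the traces appearing below are classical. The assertion for an arbitrary mild solution then follows by the usual density argument: approximate $U_0\in H$ by data in $D(A(0))$ and pass to the limit in $E(t)\le E(s)$, using the strong continuity of the evolution family. I would split $E(t)=E_1(t)+E_2(t)$, with $E_1(t)=\frac12\int_0^L(\eta^2+\omega^2)\,dx$ the physical energy and $E_2(t)=\frac{|\beta|}{2}\tau(t)\int_0^1 z^2(t,\rho)\,d\rho$ the delay term, recalling $z(t,\rho)=\eta_{xx}(t-\tau(t)\rho,L)$.

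For $E_1$, I would differentiate under the integral sign and use \eqref{eq:KdV-KdVaaaa}, i.e.\ test the first equation against $\eta$, the second against $\omega$, and add; here $a=c$ and $a_1=c_1$ are essential, since only then do the mixed interior terms combine into exact $x$-derivatives (of $\eta\omega_{xx}+\omega\eta_{xx}-\eta_x\omega_x$ for the third-order part, and of $\eta\omega_{xxxx}+\omega\eta_{xxxx}-\eta_x\omega_{xxx}-\omega_x\eta_{xxx}+\eta_{xx}\omega_{xx}$ for the fifth-order part), leaving no bulk contribution. The first-order terms assemble into $\int_0^L(\eta\omega)_x\,dx=0$. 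Evaluating the remaining boundary terms and invoking $\eta=\omega=\eta_x=\omega_x=0$ at $x=0,L$ together with $\eta_{xx}(t,0)=0$, all of them vanish except one, so
\[
\frac{d}{dt}E_1(t)=-a_1\,\eta_{xx}(t,L)\,\omega_{xx}(t,L).
\]

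For $E_2$, I would use the transport equation \eqref{eq:tr} in the form $\tau(t)z_t=-(1-\dot\tau(t)\rho)z_\rho$ followed by an integration by parts in $\rho$; the two terms proportional to $\dot\tau(t)\int_0^1 z^2\,d\rho$ cancel—precisely the reason for the coefficient $(1-\dot\tau(t)\rho)/\tau(t)$ in \eqref{eq:tr}—while the boundary values at $\rho=0,1$ together with $z(t,0)=\eta_{xx}(t,L)$ and $z(t,1)=\eta_{xx}(t-\tau(t),L)$ give
\[
\frac{d}{dt}E_2(t)=\frac{|\beta|}{2}\,\eta_{xx}^2(t,L)-\frac{|\beta|}{2}\,\bigl(1-\dot\tau(t)\bigr)\,\eta_{xx}^2(t-\tau(t),L).
\]
Inserting the feedback $\omega_{xx}(t,L)=\alpha\,\eta_{xx}(t,L)+\beta\,\eta_{xx}(t-\tau(t),L)$ into $\frac{d}{dt}E_1$ and adding, the sum becomes a quadratic form in $\bigl(\eta_{xx}(t,L),\eta_{xx}(t-\tau(t),L)\bigr)$; bounding $\dot\tau(t)$ by $d$ (via $\dot\tau(t)\le d<1$) yields \eqref{eq:EnDiss3}, and since \eqref{eq:CCond2} makes $\Phi_{\alpha,\beta}$ negative definite—as already observed in the proof of Theorem~\ref{well-lin}—it follows that $\frac{d}{dt}E(t)\le 0$, i.e.\ $E$ is non-increasing.

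I expect the main obstacle to be purely computational: the careful bookkeeping of the boundary terms produced by integrating the fifth-order terms by parts, checking that the prescribed boundary data annihilate every boundary contribution at $x=0$ and all but the $\eta_{xx}\omega_{xx}$ term at $x=L$; the remaining steps (the transport-equation computation for $E_2$ and the reduction to the matrix $\Phi_{\alpha,\beta}$) are routine.
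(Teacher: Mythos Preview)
Your proposal is correct and is precisely the standard computation the paper has in mind---the paper itself omits the proof as ``straightforward.'' One small remark: as you correctly observe, the exact computation produces $|\beta|(\dot\tau(t)-1)$ rather than $|\beta|(d-1)$ in the lower-right entry, so \eqref{eq:EnDiss3} is in fact an inequality $\frac{d}{dt}E(t)\le \tfrac12(\cdot)^T\Phi_{\alpha,\beta}(\cdot)$; this is a minor imprecision in the paper's statement but is harmless for every subsequent use.
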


\begin{proof}
  The proof is straightforward and hence omitted.
\end{proof}

We now proceed to prove the Kato smoothing property, along with several \emph{a priori} estimates. These results are crucial for establishing the well-posedness of the system~\eqref{eq:KdV-KdV}. In the following, \((S_t(s))_{s \geq 0}\) represents the two-parameter semigroup of contractions associated with the operator \(A(t)\). We are now prepared to state the following result:

\begin{proposition}
	Let the parameters $a, c, a_1, c_1$
verify \eqref{conditions_para} and  $\alpha$ and $\beta$ are real constant  such that~\eqref{matrix_nega_def_albe} holds. Then, the following estimate holds:
	\begin{equation}\label{eq:Kato1}
		\lVert
		(\eta,\omega)
		\rVert_{X_0}^2
		+ \lvert \beta \rvert
		\lVert
		z
		\rVert_{L^2(0,1)}^2
		\leq
		\lVert
		(\eta_0,\omega_0)
		\rVert_{X_0}^2
		+ \lvert \beta \rvert
		\lVert
		z_0(-\tau(0)\cdot)
		\rVert_{L^2(0,1)}^2,
	\end{equation}
	Furthermore, for every initial condition $(\eta_0,\omega_0, z_0)\in {H}$, we have that
	\begin{equation}\label{eq:Katotr0}
		\lVert \eta_{xx}(\cdot,L) \rVert_{L^2(0,T)}^2
		+\lVert z(\cdot,1) \rVert_{L^2(0,T)}^2
		\leq  \lVert(\eta_0,\omega_0)\rVert_{X_0}^2 +
		\lVert z_0(-\tau(0)\cdot)\rVert_{L^2(0,1)}^2.
	\end{equation}
On the other hand, for the initial datum, we have the following estimates
	\begin{equation}\label{eq:Kato3}
		\begin{split}
		\lVert
		(\eta_0,\omega_0)
		\rVert_{X_0}^2
		\leq&
		\frac{1}{T}
		\lVert
		(\eta,\omega)
		\rVert_{L^2(0,T; X_0)}^2
		\\&+(2\alpha+\lvert\beta\rvert)
		\lVert
		\eta_{xx}(\cdot,L)
		\rVert_{L^2(0,T)}^2
		+\lvert\beta\rvert
		\lVert
		z(\cdot,1)
		\rVert_{L^2(0,1)}^2
		\end{split}
	\end{equation}
	and
	\begin{equation}\label{eq:Kato4}
		\lVert z_0(-\tau(0)\cdot)\rVert_{L^2(0,1)}^2 \leq
		C_1(d,M)\left(\lVert z(T,\cdot )\rVert_{L^2(0,1)}
		+  \lVert z(\cdot, 1) \rVert_{L^2(0,T)}^2\right).
	\end{equation}

 Finally, for $0<L<\sqrt{\frac{5 a_1}{3 a}}\pi$, the Kato smoothing effect  is verified
	\begin{equation}\label{eq:Kato2}
		\int_0^T\int_0^L
		\left(\eta_{xx}^2+\omega_{xx}^2\right)
		\,dx\,dt
		\leq
		C(L,T,\alpha)
		\left(
		\lVert (\eta_0,\omega_0) \rVert_{X_0}^2
		+
		\lVert z_0(-\tau(0)\cdot) \rVert_{L^2(0,1)}^2
		\right),
	\end{equation} and the map
	\begin{equation*}
		(\eta_0,\omega_0;z_0)\in{H} \mapsto (\eta,\omega;z) \in \mathcal{B} \times C(0,T; L^2(0,1))
	\end{equation*}
	is well-defined and continuous.
\end{proposition}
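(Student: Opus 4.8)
The plan is to establish all five estimates first for data $U_0\in D(A(0))$ --- where Theorem~\ref{well-lin} supplies a solution $U=(\eta,\omega;z)\in C([0,\infty);D(A(0)))\cap C^1([0,\infty);H)$ and the integrations by parts below are legitimate --- and then to extend to arbitrary $U_0\in H$ by density of $D(A(0))$ in $H$, each bound being linear (hence continuous) in the data. With this reduction, \eqref{eq:Kato1} is immediate: by Proposition~\ref{pr:Diss} the identity \eqref{eq:EnDiss3} and the negative definiteness of $\Phi_{\alpha,\beta}$ under \eqref{eq:CCond2} give $\frac{d}{dt}E(t)\le 0$, and writing out $E(t)\le E(0)$ with $\tau(0)\le\tau(t)\le M$ yields the claim.

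For \eqref{eq:Katotr0} I would integrate \eqref{eq:EnDiss3} over $(0,T)$: negative definiteness of $\Phi_{\alpha,\beta}$ furnishes $c_0>0$ with $-v^{T}\Phi_{\alpha,\beta}v\ge c_0|v|^2$, so, since $z(t,1)=\eta_{xx}(t-\tau(t),L)$,
$$c_0\int_0^T\big(\eta_{xx}^2(t,L)+z^2(t,1)\big)\,dt\ \le\ 2\big(E(0)-E(T)\big)\ \le\ 2E(0),$$
which is \eqref{eq:Katotr0} after bounding $E(0)$ by the data. For \eqref{eq:Kato4} I would use $z$ itself as a multiplier in the transport equation \eqref{eq:tr}: multiplying by $z$ and integrating over $\rho\in(0,1)$ gives
$$\tfrac12\tfrac{d}{dt}\big(\tau(t)\|z(t,\cdot)\|_{L^2(0,1)}^2\big)\ =\ \tfrac12 z^2(t,0)-\tfrac{1-\dot\tau(t)}{2}z^2(t,1),$$
and integrating over $(0,T)$, discarding the nonnegative term $\int_0^T z^2(t,0)\,dt=\int_0^T\eta_{xx}^2(t,L)\,dt$, and using $\tau(0)\le\tau(t)\le M$ together with the $L^\infty$ bound on $\dot\tau$ from $\tau\in W^{2,\infty}$, yields \eqref{eq:Kato4}. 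The same transport identity shows $\int_0^T\|z(t,\cdot)\|_{L^2(0,1)}^2\,dt$ is controlled by the data and \eqref{eq:Katotr0}, which, combined with the monotonicity bound $E(0)\le\frac1T\int_0^T E(t)\,dt+(E(0)-E(T))$ and \eqref{eq:Katotr0}, produces \eqref{eq:Kato3}.

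The heart of the proposition is the smoothing estimate \eqref{eq:Kato2}, and the effective device is the \emph{crossed} multiplier $(x\omega,x\eta)$: multiply the first equation of \eqref{eq:KdV-KdVaaaa} by $x\omega$ and the second by $x\eta$, add, and integrate over $(0,L)$. The time terms combine into $\frac{d}{dt}\int_0^L x\eta\omega\,dx$; the spatial terms decouple into a pure-$\omega$ and a pure-$\eta$ contribution, and one, three and five integrations by parts produce, respectively, $-\frac12\int_0^L(\eta^2+\omega^2)$, $\frac{3a}{2}\int_0^L(\eta_x^2+\omega_x^2)$, and $-\frac{5a_1}{2}\int_0^L(\eta_{xx}^2+\omega_{xx}^2)+\frac{a_1L}{2}\big(\eta_{xx}^2(t,L)+\omega_{xx}^2(t,L)\big)$, every other boundary term vanishing because $\eta,\omega,\eta_x,\omega_x$ vanish at $x=0,L$. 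Integrating over $(0,T)$, discarding $-\frac12\int_0^T\!\int_0^L(\eta^2+\omega^2)\le 0$, and absorbing the first-order term via the Poincaré inequality $\|\eta_x\|_{L^2(0,L)}^2\le\frac{L^2}{\pi^2}\|\eta_{xx}\|_{L^2(0,L)}^2$ (valid since $\eta_x,\omega_x$ vanish at both endpoints), I obtain
$$\Big(\frac{5a_1}{2}-\frac{3aL^2}{2\pi^2}\Big)\int_0^T\!\!\int_0^L\big(\eta_{xx}^2+\omega_{xx}^2\big)\,dx\,dt\ \le\ \Big[\int_0^L x\,\eta\omega\,dx\Big]_0^T+\frac{a_1L}{2}\int_0^T\big(\eta_{xx}^2(t,L)+\omega_{xx}^2(t,L)\big)\,dt.$$
The hypothesis $0<L<\sqrt{5a_1/(3a)}\,\pi$ is precisely what makes the left-hand coefficient strictly positive; the endpoint term is bounded by $L(E(0)+E(T))\le 2LE(0)$ using Cauchy--Schwarz and \eqref{eq:Kato1}, and the boundary integral is controlled by writing $\omega_{xx}(t,L)=\alpha\eta_{xx}(t,L)+\beta z(t,1)$ and invoking \eqref{eq:Katotr0}. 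This gives \eqref{eq:Kato2} with a constant of the stated form.

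Finally, since the solution map is linear, its well-definedness and continuity from $H$ into $\mathcal{B}\times C(0,T;L^2(0,1))$ amount to the boundedness expressed by the estimates above: \eqref{eq:Kato1} and the strong continuity of the evolution family control the $C([0,T];X_0)$ factor; \eqref{eq:Kato2} controls the $L^2(0,T;[H_0^2(0,L)]^2)$ factor because on $H_0^2(0,L)$ the full $H^2(0,L)$-norm is controlled by the $L^2(0,L)$-norm of the second derivative (Poincaré, since $\eta$ and $\eta_x$ vanish at the endpoints); and the transport identity for $\tau(t)\|z(t,\cdot)\|_{L^2(0,1)}^2$ together with \eqref{eq:Katotr0} controls the $C(0,T;L^2(0,1))$ factor for $z$; the passage from $D(A(0))$ to all of $H$ is then the density argument noted at the outset. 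I expect the main obstacle to be the sharp bookkeeping in \eqref{eq:Kato2}: extracting exactly the constants $\frac{5a_1}{2}$ and $\frac{3a}{2}$ from the fifth- and third-order terms (so that the threshold $\sqrt{5a_1/(3a)}\,\pi$ appears), while checking that every boundary contribution except $\frac{a_1L}{2}\big(\eta_{xx}^2(L)+\omega_{xx}^2(L)\big)$ is annihilated by the homogeneous boundary conditions.
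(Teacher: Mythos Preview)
Your proposal is correct and follows the paper's approach: \eqref{eq:Kato1} and \eqref{eq:Katotr0} come from Proposition~\ref{pr:Diss} and the negative definiteness of $\Phi_{\alpha,\beta}$ exactly as the paper does, and the key smoothing estimate \eqref{eq:Kato2} uses the same crossed multiplier $(x\omega,x\eta)$ together with Poincar\'e to produce precisely the threshold $L<\sqrt{5a_1/(3a)}\,\pi$. The paper omits the proofs of \eqref{eq:Kato3} and \eqref{eq:Kato4} (citing \cite{Boumediene2024}); your transport-identity argument for \eqref{eq:Kato4} is the standard one, while for \eqref{eq:Kato3} the cleaner device is the $(T-t)$--weighted multiplier applied to $\tfrac12\|(\eta,\omega)\|_{X_0}^2$ alone, which gives the stated inequality directly without the detour through $\int_0^T\|z(t,\cdot)\|_{L^2(0,1)}^2\,dt$.
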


\begin{proof}
Using \eqref{eq:EnDiss3} and the fact that $\Phi_{\alpha,\beta}$ is  a symmetric negative definite matrix, we deduce the existence of a positive constant $C$, such that
 \begin{align*}
 E^{\prime}(t)
 =
 \frac{1}{2} \begin{pmatrix}
\eta_{xx}(t, L) \\ z(t, 1)
\end{pmatrix}^{T}
\Phi_{\alpha,\beta}
\begin{pmatrix}
\eta_{xx}(t, L) \\ z(t, 1)
\end{pmatrix} \leq -C\left(\eta_{xx}^2(t, L) + z^2(t, 1)\right).
 \end{align*}
 Thus, it follows from the above estimate that
 \begin{align} \label{se_limi_ener_2}
   E'(t) + \eta_{xx}^2(t,L) + z^2(t,1) \leq 0
 \end{align} Integrating \eqref{se_limi_ener_2} in $[0,s]$, for $0\leq s \leq T$, we get
	\begin{equation*}
		E(s) + \int_0^s \eta_{xx}^2(t,L)\,dt + \int_0^s z^2(t,1)\,dt \leq E(0),
	\end{equation*}
	and \eqref{eq:Kato1} is obtained. Taking $s=T$ and since $E(t)$ is a non-increasing function (see Proposition~\ref{pr:Diss}), the estimate~\eqref{eq:Katotr0} holds.

Secondly, the proof of estimates \eqref{eq:Kato3} and \eqref{eq:Kato4} is analogous to that of \cite{Boumediene2024}, and we will omit the details.

 Now, we show the inequality \eqref{eq:Kato2} provided that $0<L<\sqrt{\frac{5 a_1}{3 a}}\pi$. Initially,  multiplying the first equation of \eqref{eq:KdV-KdVaaaa} by $x\omega$ and the second one by $x\eta$. Next, adding the results, then integrating by parts over $(0,L)\times(0,T)$ and invoking \eqref{eq:Kato1}-\eqref{eq:Katotr0}, we obtain
\begin{equation}\label{mul_part_1}
 \begin{split}
 &\frac{1}{2}\int_0^T\int_0^L
			\left(\eta^2+\omega^2\right)
			\,dx\,dt  - \frac{3 a}{2}\int_0^T\int_0^L
			\left(\eta_x^2+\omega_x^2\right)
			\,dx\,dt + \frac{5 a_1}{2}\int_0^T\int_0^L
			\left(\eta_{xx}^2+\omega_{xx}^2\right)
			\,dx\,dt \\
    & =  \frac{a_1 L}{2}\int_0^T\left(\eta_{xx}^2(t,L)+\omega_{xx}^2(t,L)\right)dt - \int_0^L x\left( \eta(t,x)\omega(t,x)-\eta_0(x)\omega_0(x)\right)dx \\
    & \leq C(L,\alpha, a_1)\left( \lVert(\eta_0,\omega_0)\rVert_{X_0}^2
			+\lVert z_0(-\tau(0)\cdot) \rVert_{L^2(0,1)}^2\right),
 \end{split}
 \end{equation}
 for some positive constant $C(L,\alpha, a_1)$. Since $0<L<\sqrt{\frac{5 a_1}{3 a}}\pi$, from Poincaré inequality, there exists $C_L = \frac{1}{2}\left( 5a_1 \pi^2 - 3aL^2 \right)>0$, such that
\begin{equation}\label{mul_part_22}
\begin{split}
C_L\int_0^T\int_0^L
			\left(\eta_{xx}^2+\omega_{xx}^2\right)
			\,dx\,dt \leq&  - \frac{3 a}{2}\int_0^T\int_0^L
			\left(\eta_x^2+\omega_x^2\right)\,dx\,dt
			\\& + \frac{5 a_1}{2}\int_0^T\int_0^L
			\left(\eta_{xx}^2+\omega_{xx}^2\right)
			\,dx\,dt.
			\end{split}
\end{equation}
Thus, from \eqref{mul_part_1} and \eqref{mul_part_22}, we obtain  \eqref{eq:Kato2}.
\end{proof}

The next result ensures the existence of solutions to the fifth-order KdV-KdV system with sufficient regular source terms.

\begin{theorem}
	Suppose that \eqref{eq:TauCond} and \eqref{matrix_nega_def_albe} hold. Let $U_0 = (\eta_0, \omega_0 , z_0) \in H$ and the source terms $(f_1,f_2)\in L^1(0,T; X_0)$. Then, if  the parameters $a, a_1$
verify \eqref{conditions_para}, there exists a unique solution $U = (\eta,\omega, z) \in C([0,T], H)$ to
	\begin{equation*}
		\begin{cases}
			\eta_t +  \omega_{x} + a \omega_{xxx} + a_1 \omega_{xxxxx} =f_1, & t>0, x\in(0,L), \\
			\omega_t+\eta_{x} + a\eta_{xxx} + a_1\eta_{xxxxx} = f_2, & t>0, x\in(0,L), 
		\end{cases}
	\end{equation*}
	with boundary conditions as in \eqref{eq:KdV-KdV}.
	Moreover, for $T > 0$, there exists a positive constant $C$ such that the following estimates hold
	\begin{equation*}
\left\{	\begin{array}{l}
		\lVert (\eta,\omega; z) \rVert_{C([0,T], H)} \leq
		C (\lVert (\eta_0,\omega_0, z_0) \rVert_H + \lVert (f,g) \rVert_{L^1(0,T, X_0)} ),
\\[2mm]
		\lVert(\eta_{xx}(\cdot,L), z(\cdot,1))\rVert_{[L^2(0,T)]^{2}}^2
		\leq C(\lVert{(\eta_0,\omega_0,z_0)}\rVert_{H}^2 + \lVert (f,g) \rVert_{L^1(0,T, X_0)}^2),
		\end{array}
		\right.
	\end{equation*}
	and, for $0<L<\sqrt{\frac{5 a_1}{3 a}}\pi$,
 \begin{align*}
 \lVert (\eta,\omega) \rVert_{L^2(0,T, H^2_0(0,L))} \leq
		C (\lVert (\eta_0,\omega_0, z_0) \rVert_H + \lVert (f,g) \rVert_{L^1(0,T, X_0)}).
 \end{align*}
\end{theorem}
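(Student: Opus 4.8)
The plan is to view the system as the inhomogeneous non-autonomous Cauchy problem
\[
\frac{d}{dt}U(t) = A(t)U(t) + F(t), \qquad U(0)=U_0,
\]
in $H$, where $A(t)$ is the operator \eqref{eq:A}--\eqref{DAA} and $F(t)=(f_1(t),f_2(t),0)^{T}$. Since the $z$-component of $F$ vanishes, $\|F(t)\|_H=\|(f_1(t),f_2(t))\|_{X_0}$, so $F\in L^1(0,T;H)$ with $\|F\|_{L^1(0,T;H)}=\|(f_1,f_2)\|_{L^1(0,T;X_0)}$. Existence of a mild solution is then obtained from Theorem~\ref{well-lin} via the variation-of-parameters formula $U(t)=\mathcal{U}(t,0)U_0+\int_0^t\mathcal{U}(t,s)F(s)\,ds$, where $\mathcal{U}(t,s)$ is the evolution operator on $H$ associated with $\{A(t)\}$ (built from the two-parameter family $(S_t(s))$ of Theorem~\ref{well-lin}); boundedness of $\mathcal{U}$ on $[0,T]$, which holds because it is a contraction up to the scalar factor $e^{\int_s^t\kappa}$ with $\kappa$ bounded on $[0,T]$, together with $F\in L^1(0,T;H)$, yields $U\in C([0,T],H)$ and the first estimate. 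Uniqueness is immediate from linearity: the difference of two solutions solves the homogeneous problem with zero data and hence vanishes by Theorem~\ref{well-lin}. The remaining two estimates will be proved first for smooth data by the energy identities below, and then extended to general data in $H$ using the density of $D(A(0))$ in $H$ and the continuity of the solution map.

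For the trace estimate I would multiply the first equation by $\eta$ and the second by $\omega$, add, integrate over $(0,L)$, and incorporate the transport identity for $z$ exactly as in the computation of \eqref{eq:EnDiss3} in Proposition~\ref{pr:Diss}; the only new term is the source contribution $\int_0^L(f_1\eta+f_2\omega)\,dx$. Because the boundary terms assemble into the quadratic form of the negative definite matrix $\Phi_{\alpha,\beta}$ of \eqref{matrix_nega_def_albe}, and $E(t)=\tfrac12\|U(t)\|_H^2$ by \eqref{eq:En}, this gives
\[
\frac{d}{dt}E(t)\;\le\;-C\big(\eta_{xx}^2(t,L)+z^2(t,1)\big)+\|(f_1,f_2)(t)\|_{X_0}\sqrt{2E(t)} .
\]
From $\frac{d}{dt}\sqrt{E(t)}\le\frac{1}{\sqrt2}\|(f_1,f_2)(t)\|_{X_0}$ I obtain the $C([0,T],H)$ bound; integrating the differential inequality over $[0,T]$, bounding the source term by $\big(\sup_{[0,T]}\sqrt{E}\,\big)\|(f_1,f_2)\|_{L^1(0,T;X_0)}$ and using Young's inequality then yields the $L^2(0,T)$ bound for $(\eta_{xx}(\cdot,L),z(\cdot,1))$.

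For the interior smoothing estimate, valid in the range $0<L<\sqrt{5a_1/(3a)}\,\pi$, I would repeat the multiplier argument that produced \eqref{mul_part_1}: multiply the first equation by $x\omega$ and the second by $x\eta$, add, and integrate by parts over $(0,L)\times(0,T)$. Relative to the homogeneous case there is a single extra term, $\int_0^T\!\!\int_0^L x(f_1\omega+f_2\eta)\,dx\,dt$, which is bounded by $L\,\|(f_1,f_2)\|_{L^1(0,T;X_0)}\sup_{[0,T]}\|(\eta,\omega)(t)\|_{X_0}$ and hence controlled by the $C([0,T],H)$ bound already obtained, while the boundary term $\tfrac{a_1L}{2}\int_0^T(\eta_{xx}^2+\omega_{xx}^2)(t,L)\,dt$ is controlled by the trace estimate; the Poincaré inequality, with constant $\tfrac12(5a_1\pi^2-3aL^2)>0$, then absorbs the $H^1$ term and produces the claimed $L^2(0,T;H^2_0(0,L))$ bound.

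The main obstacle is not in the estimates themselves, which are the same multiplier computations already carried out for the homogeneous problem, but in the rigorous handling of the non-autonomous framework: justifying the Duhamel representation when the inner product $\langle\cdot,\cdot\rangle_t$ — and hence the very notion of contraction — varies with $t$, and, above all, transferring the trace and interior-smoothing bounds from smooth data to arbitrary data in $H$, since these are hidden-regularity estimates not implied by the bound on $\mathcal{U}$ alone. This limiting argument proceeds as in \cite{Nicaise2006, Nicaise2009, Boumediene2024}.
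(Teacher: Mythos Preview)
Your proposal is correct and follows exactly the approach the paper has in mind: the paper omits the proof entirely, stating only that it is ``analogous to that of \cite[Theorem 2.5]{Boumediene2024}'', which is precisely the Duhamel-plus-multiplier argument you outline (including the energy identity yielding the trace bound via the negative definiteness of $\Phi_{\alpha,\beta}$, the $x$-multiplier for the Kato smoothing, and the density/limiting step to pass from smooth to general data). There is nothing to add.
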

\begin{proof}
This proof is analogous to that of \cite[Theorem 2.5]{Boumediene2024}, and hence we omit it.
\end{proof}

\subsection{Nonlinear problem}
In this subsection, we show the well-posedness of the nonlinear problem \eqref{eq:KdV-KdV} by using the approach of \cite{CF2019}, where the solutions are obtained via the transposition method and the existence and uniqueness by using the Riesz-representation theorem.

To prove the well-posedness result for system \eqref{eq:KdV-KdV}, we consider the non-homogeneous system
\begin{equation}\label{KdV-KdV-h1h2}
\begin{cases}
\eta_t +  \omega_{x} + a \omega_{xxx} + a_1 \omega_{xxxxx} = h_1
,& \text { in }(0, T)\times(0, L),
\\
\omega_t+\eta_{x} + a\eta_{xxx} + a_1\eta_{xxxxx}= h_2 
,&\text { in }(0, T)\times(0, L), \\
\eta(t,0) = \eta(t,L) = \eta_x(t,0) = \eta_x(t,L) = \eta_{xx}(t,0)= 0, & t\in(0,T),\\
\omega(t,0) = \omega(t,L) = \omega_x(t,0) = \omega_x(t,L) = 0, & t\in(0,T),\\
\omega_{xx}(t,L) = f(t) 
& t\in(0,T),
\\
\left(\eta(0,x),\omega(0,x)\right) = \left(\eta_0(x),\omega_0(x)\right), & x\in(0,L),
\end{cases}
\end{equation}
where  the parameters $a, a_1$ verify \eqref{conditions_para}. Remember the definition of $X_3$ giving by \eqref{x3}, and also consider the following set
$$\tilde{X}_3:=\left\{(\varphi,\psi) \in \left[ H^3(0,L)\cap H_0^2(0,L) \right]^2 | \varphi_{xx}(0)=\psi_{xx}(L)=0\right\}.$$
We define a solution by transposition\footnote{See \cite{lions1,lions2} to justify the choice of the formula \eqref{transp}  below.} as follows.
\begin{definition}[Solution by transposition]
     Let $T>0,$ $(\eta_0,\omega_0)\in  X_3$, $f\in L^2(0,T)$ and
     $$(h_1,h_2)\in L^2(0,T,[H^{-2}(0,L)]^2).$$ A solution of the problem \eqref{KdV-KdV-h1h2} is a function $(\eta,\omega)\in C(0,T;X_3)$ such that, for all $\sigma\in [0,T]$ and $(\varphi_\sigma,\psi_\sigma)\in \tilde{X}_3$ the following identity holds
\begin{equation}\label{transp}
\begin{aligned}
\left\langle (\eta(\sigma), \omega(\sigma)),\left(\varphi_\sigma, \psi_\sigma\right)\right\rangle&_{[H^3(0,L)\cap H_0^2(0,L)]^2}= \left\langle\left(\eta_0, \omega_0\right),(\varphi(0), \psi(0))\right\rangle_{[H^3(0,L)\cap H_0^2(0,L)]^2} \\ &+\int_{0}^{\sigma} f(t)
\, \varphi_{x x}(t,L)dt  +\int_0^{\sigma}\left\langle\left(h_1(t), h_2(t)\right),(\varphi(t), \psi(t))\right\rangle_{(H^{-2},H^2_0)^2}dt,
\end{aligned}
\end{equation}
where the pair $(\varphi,\psi)$ is the solution of
\begin{equation}\label{adjoint}
\begin{cases}\varphi_t+\psi_x-a \psi_{x x x}+a_1 \psi_{x x xx x}=0, & \text { in }(0, L) \times(0, \sigma), \\ \psi_t+\varphi_x-a \varphi_{x x x}+a_1 \varphi_{x x x x x}=0, & \text { in }(0, L) \times(0, \sigma), \\ \varphi(0, t)=\varphi(L, t)=\varphi_x(0, t)=\varphi_x(L, t)=\varphi_{x x}(0, t)=0, & \text { on }(0, \sigma), \\ \psi(0, t)=\psi(L, t)=\psi_x(0, t)=\psi_x(L, t)=\psi_{x x}(L, t)=0, & \text { on }(0, \sigma), \\ \varphi(x, \sigma)=\varphi_\sigma, \quad \psi(x, \sigma)=\psi_\sigma, & \text { on }(0, L) .\end{cases}
\end{equation}
\end{definition}

Thanks to \cite[Corollary 2.5 and Proposition 2.6]{CF2019}, the following well-posedness result for the system \eqref{adjoint} is established:

\begin{proposition}
 For all $(\varphi_\sigma,\psi_\sigma)\in
\tilde{X}_3,$ system \eqref{adjoint} admits a unique solution
$(\varphi,\psi)\in C([0,\sigma];\tilde{X}_3)$  which satisfies
\begin{equation}\label{adjProp:1}
    \lVert (\varphi(t),\psi(t)) \rVert_{[H^3(0,L)\cap H_0^2(0,L)]^2} \leq
		C \lVert (\varphi_\sigma,\psi_\sigma) \rVert_{[H^3(0,L)\cap H_0^2(0,L)]^2}, \quad \forall t\in[0,\sigma].
\end{equation}
Additionally, we have that
\begin{equation}\label{adjProp:2}
\int_{0}^{\sigma}|\varphi_{xx}(L,t)|^2+|\psi_{xx}(0,t)|^2\,dt \leq C \lVert (\varphi_\sigma,\psi_\sigma) \rVert_{[H_0^2(0,L)]^2}^2.
\end{equation}
\end{proposition}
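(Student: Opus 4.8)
The statement is precisely \cite[Corollary 2.5 and Proposition 2.6]{CF2019}, since after the change of variable $t\mapsto\sigma-t$ (and an exchange of the roles of the two components) system \eqref{adjoint} is of exactly the type analysed there, so the quickest proof is to quote those results. If one prefers a self-contained argument, the plan is the following. First reverse time, setting $V(s)=(\varphi(\sigma-s),\psi(\sigma-s))$, so that \eqref{adjoint} becomes a forward Cauchy problem $V_s=\mathcal{A}V$, $V(0)=(\varphi_\sigma,\psi_\sigma)$, with
$$\mathcal{A}(\varphi,\psi)=\bigl(\psi_x-a\psi_{xxx}+a_1\psi_{xxxxx},\,\varphi_x-a\varphi_{xxx}+a_1\varphi_{xxxxx}\bigr),$$
and $D(\mathcal{A})=\{(\varphi,\psi)\in[H^5(0,L)\cap H^2_0(0,L)]^2:\varphi_{xx}(0)=\psi_{xx}(L)=0\}$, a dense subspace of $X_0$ containing $\bar{X}_3$.

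Next I would establish well-posedness in $X_0$. Integrating by parts and using all the boundary conditions, every boundary term in $\langle\mathcal{A}V,V\rangle_{X_0}$ cancels: the first- and third-order contributions vanish outright, while the fifth-order one reduces to the bilinear bracket $a_1[\psi_{xxxx}\varphi-\psi_{xxx}\varphi_x+\psi_{xx}\varphi_{xx}-\psi_x\varphi_{xxx}+\psi\varphi_{xxxx}]_0^L$, which is zero thanks to $\varphi,\varphi_x,\psi,\psi_x\in H^2_0$ together with the surviving-term conditions $\varphi_{xx}(0)=\psi_{xx}(L)=0$. Hence $\langle\mathcal{A}V,V\rangle_{X_0}=0$, and the same computation gives $\langle\mathcal{A}^\ast V,V\rangle_{X_0}=0$. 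Thus $\mathcal{A}$ and $-\mathcal{A}$ are both dissipative with dense domains, and by the Lumer--Phillips theorem (see \cite{Pazy}) $\mathcal{A}$ generates a $C_0$-group of isometries on $X_0$, which yields a unique mild solution for data in $X_0$ and a classical solution in $C([0,\sigma];D(\mathcal{A}))\cap C^1([0,\sigma];X_0)$ for data in $D(\mathcal{A})$. The estimate \eqref{adjProp:1} then follows by interpolation: the group is bounded (by $1$) on $X_0$ and, commuting with $\mathcal{A}$, is bounded on $D(\mathcal{A})$ equipped with its graph norm, hence bounded on $[X_0,D(\mathcal{A})]_{3/5}$, which -- once the boundary conditions are taken into account in the usual way -- coincides with $\bar{X}_3$ with equivalent norm.

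Finally, the hidden trace regularity \eqref{adjProp:2} is obtained by the multiplier method, exactly as in \cite{CF2019}: one multiplies the equations of \eqref{adjoint} by suitable $x$-weighted multipliers (of the type $x\varphi_x$, $x\psi_x$, together with products of the solution with its derivatives), integrates by parts over $(0,L)\times(0,\sigma)$, isolates $\int_0^\sigma\varphi_{xx}^2(L,t)\,dt$ and $\int_0^\sigma\psi_{xx}^2(0,t)\,dt$ on the left, and absorbs the remaining terms using the $X_0$-energy identity and the $L^2_tH^2_x$ interior smoothing for the adjoint system (the analogue of \eqref{eq:Kato2}, obtained by the same argument as in the previous Proposition). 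I expect this last step to be the main obstacle, for two reasons. The naive choice of multipliers $(\varphi,\psi)$ is useless, because the system is conservative in $X_0$: multiplying the second equation by $\varphi$ and the first by $\psi$ and adding only reproduces a conservation-type identity in which $\int_0^\sigma\varphi_{xx}^2(L)\,dt$ and $\int_0^\sigma\psi_{xx}^2(0)\,dt$ cancel against each other, so a genuine spatial weight is unavoidable. Moreover, the integrations by parts must be organized so that no term of order higher than $H^2$ in the data survives on the right-hand side; this is what produces the \emph{sharp} form of \eqref{adjProp:2}, with the $[H^2_0]^2$-norm of $(\varphi_\sigma,\psi_\sigma)$ rather than the full $[H^3]^2$-norm, and it is precisely this sharpness that makes the term $\int_0^\sigma f(t)\,\varphi_{xx}(t,L)\,dt$ in the transposition identity \eqref{transp} meaningful for $f\in L^2(0,T)$.
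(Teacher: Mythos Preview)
Your proposal is correct and takes exactly the same approach as the paper: the paper does not give an independent proof of this proposition but simply invokes \cite[Corollary~2.5 and Proposition~2.6]{CF2019}, which is precisely what you do in your first sentence. The self-contained sketch you add (time reversal, skew-adjointness of the operator on $X_0$ via Lumer--Phillips, interpolation to reach $\bar X_3$, and the multiplier argument for the hidden trace estimate) goes beyond the paper and mirrors the strategy actually carried out in \cite{CF2019}; it is a reasonable outline, though the paper itself makes no attempt at it.
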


The following result gives us the existence and uniqueness of the solution for system \eqref{KdV-KdV-h1h2}.
\begin{lemma}\label{lemma:wellp}
     Let $T>0,\left(\eta_0, \omega_0\right) \in X_3,\left(h_1, h_2\right) \in L^2\left(0, T ; [H^{-2}(0,L)]^2\right)$ and $f \in L^2(0, T)$. There exists a unique solution $(\eta, \omega) \in C\left([0, T] ; X_3\right)$ of the system \eqref{KdV-KdV-h1h2}. Moreover, there exists a positive constant $C_T$, such that
\begin{equation}\label{Lemma:1}
\begin{split}
\|(\eta(\sigma), \omega(\sigma))\|_{[H^3(0,L)\cap H_0^2(0,L)]^2} \leq& C_T\left(\left\|\left(\eta_0, \omega_0\right)\right\|_{[H^3(0,L)\cap H_0^2(0,L)]^2}+\|f\|_{L^2(0,T)}\right. \\
&\left.+\left\|\left(h_1, h_2\right)\right\|_{L^2\left(0, T: [H_0^2(0,L)]^2\right)}\right),
\end{split}
\end{equation}
for all $\sigma \in[0, T]$.
\end{lemma}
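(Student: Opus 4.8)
The plan is to obtain the solution of \eqref{KdV-KdV-h1h2} by the transposition (duality) method, using the adjoint system \eqref{adjoint} and its a priori bounds \eqref{adjProp:1}--\eqref{adjProp:2}, and then to recover the estimate \eqref{Lemma:1} from the Riesz representation theorem applied in the Hilbert space $\bar X_3$ (which, as a closed subspace of $[H^3\cap H^2_0]^2$, carries the induced inner product). The key point throughout is that the regularity of the data — $(\eta_0,\omega_0)\in X_3$, $f\in L^2(0,T)$, $(h_1,h_2)\in L^2(0,T;[H^{-2}]^2)$ — is exactly matched to what the three terms on the right-hand side of \eqref{transp} need in order to define a bounded linear functional against test data $(\varphi_\sigma,\psi_\sigma)\in\bar X_3$.

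First I would fix $\sigma\in[0,T]$ and define the linear form $L_\sigma:\bar X_3\to\mathbb{R}$ by setting $L_\sigma(\varphi_\sigma,\psi_\sigma)$ equal to the right-hand side of \eqref{transp}, where $(\varphi,\psi)$ is the unique solution of the backward system \eqref{adjoint} with terminal data $(\varphi_\sigma,\psi_\sigma)$. I would then show $L_\sigma$ is bounded: the first term is controlled by $\|(\eta_0,\omega_0)\|_{[H^3\cap H^2_0]^2}\,\|(\varphi(0),\psi(0))\|_{[H^3\cap H^2_0]^2}$ and then by \eqref{adjProp:1}; the second term $\int_0^\sigma f\,\varphi_{xx}(t,L)\,dt$ is estimated by Cauchy--Schwarz in $t$ together with the hidden-regularity bound \eqref{adjProp:2}, giving $\|f\|_{L^2(0,T)}\|(\varphi_\sigma,\psi_\sigma)\|_{[H^2_0]^2}$; the third term is estimated by Cauchy--Schwarz in $t$ using the $H^{-2}$--$H^2_0$ duality pairing and then \eqref{adjProp:1}, giving $\|(h_1,h_2)\|_{L^2(0,T;[H^{-2}]^2)}\|(\varphi_\sigma,\psi_\sigma)\|_{[H^3\cap H^2_0]^2}$. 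Collecting these, $|L_\sigma(\varphi_\sigma,\psi_\sigma)|\le C_T\big(\|(\eta_0,\omega_0)\|_{[H^3\cap H^2_0]^2}+\|f\|_{L^2(0,T)}+\|(h_1,h_2)\|_{L^2(0,T;[H^2_0]^2)}\big)\,\|(\varphi_\sigma,\psi_\sigma)\|_{[H^3\cap H^2_0]^2}$. By the Riesz representation theorem there is a unique element $(\eta(\sigma),\omega(\sigma))\in\bar X_3\subset[H^3\cap H^2_0]^2$ such that $L_\sigma(\varphi_\sigma,\psi_\sigma)=\langle(\eta(\sigma),\omega(\sigma)),(\varphi_\sigma,\psi_\sigma)\rangle_{[H^3\cap H^2_0]^2}$ for all $(\varphi_\sigma,\psi_\sigma)$, and its norm is bounded by the same right-hand side; this is precisely \eqref{Lemma:1}, and uniqueness of the solution is immediate since \eqref{transp} determines $(\eta(\sigma),\omega(\sigma))$ uniquely in the (dense-enough) pairing.

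It remains to verify that the map $\sigma\mapsto(\eta(\sigma),\omega(\sigma))$ thus defined is continuous from $[0,T]$ into $X_3$, so that $(\eta,\omega)\in C([0,T];X_3)$, and that $(\eta,\omega)$ genuinely solves \eqref{KdV-KdV-h1h2} in the sense of the definition. Continuity in $\sigma$ I would get from the continuous dependence of the adjoint solution on its terminal time and terminal data, together with the uniform bound \eqref{adjProp:1}: for $\sigma'$ near $\sigma$ one compares the two functionals $L_\sigma$ and $L_{\sigma'}$ acting on the same test datum, using that $t\mapsto(\varphi(t),\psi(t))$ is continuous with values in $\bar X_3$ and that $f\in L^2$, $(h_1,h_2)\in L^2$ make the integral terms continuous in the upper limit. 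That $(\eta,\omega)$ is a solution is essentially tautological once the functional identity holds for every $\sigma$ and every $(\varphi_\sigma,\psi_\sigma)\in\bar X_3$, since that identity \emph{is} the defining relation \eqref{transp}; one only needs to check the initial condition, which follows by taking $\sigma=0$.

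The main obstacle I anticipate is bookkeeping rather than conceptual: making sure the three estimates above land in the correct dual norms so that they combine into a single bound against $\|(\varphi_\sigma,\psi_\sigma)\|_{[H^3\cap H^2_0]^2}$ — in particular that the $f$-term only costs the weaker $[H^2_0]^2$-norm (via \eqref{adjProp:2}) and the $(h_1,h_2)$-term is paired correctly in the $H^{-2}$--$H^2_0$ duality — and verifying that $\bar X_3$ is indeed a Hilbert space with the induced inner product so that Riesz applies and the representative lands back in $X_3$ (equivalently, that the compatibility condition $\eta_{xx}(\sigma,0)=0$ encoded in $X_3$ is consistent with the test space $\bar X_3$). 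Since \cite{CF2019} supplies exactly the needed adjoint well-posedness and hidden-regularity estimates, and since this is the standard transposition scheme already used there, I would state the routine parts briefly and refer to \cite{CF2019, lions1, lions2} for the details omitted here.
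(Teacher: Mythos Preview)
Your proposal is correct and follows essentially the same route as the paper: define the linear functional given by the right-hand side of \eqref{transp}, bound it using \eqref{adjProp:1}, \eqref{adjProp:2}, and Cauchy--Schwarz, apply the Riesz representation theorem to produce $(\eta(\sigma),\omega(\sigma))$ together with the estimate \eqref{Lemma:1}, and then defer the continuity $(\eta,\omega)\in C([0,T];X_3)$ to \cite{Capistrano2018,CF2019}. The only cosmetic difference is that the paper applies Riesz directly in $[H^3\cap H^2_0]^2$ rather than in the closed subspace $\bar X_3$, but this does not affect the argument.
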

\begin{proof}
   Let us define $\Delta$ as the linear functional given by the right-hand side of \eqref{transp}, that is
\begin{equation*}
\begin{aligned}
\Delta\left(\varphi_\sigma, \psi_\sigma\right)=& \left\langle\left(\eta_0, \omega_0\right),(\varphi(0), \psi(0))\right\rangle_{[H^3(0,L)\cap H_0^2(0,L)]^2}+\int_{0}^{\sigma} f(t)
\, \varphi_{x x}(t,L)dt \\ & +\int_0^{\sigma}\left\langle\left(h_1(t), h_2(t)\right),(\varphi(t), \psi(t))\right\rangle_{(H^{-2},H^2_0)^2}dt.
\end{aligned}
\end{equation*}
We infer from \eqref{adjProp:1}, \eqref{adjProp:2} and the Cauchy-Schwarz inequality that
\begin{equation*}
    \begin{split}
       |\Delta\left(\varphi_\sigma, \psi_\sigma\right)| \leq& \|(\eta_0, \omega_0)\|_{[H^3(0,L)\cap H_0^2(0,L)]^2}\|(\varphi_\sigma, \psi_\sigma)\|_{[H^3(0,L)\cap H_0^2(0,L)]^2}\\&  +\| f\|_{L^2(0,T)}
\| \varphi_{x x}(L)\|_{L^2(0,T)} \\ & +C\|(\varphi_\sigma, \psi_\sigma)\|_{[H^3(0,L)\cap H_0^2(0,L)]^2} \|(h_1, h_2)\|_{L^1(0,T:H^{-2}(0,L)}
\\  \leq& C_{T} \left(\|(\eta_0, \omega_0)\|_{[H^3(0,L)\cap H_0^2(0,L)]^2}\right.\\
&\left.  +\| f\|_{L^2(0,T)}+ \|(h_1, h_2)\|_{L^1(0,T:H^{-2}(0,L)}\right) \|(\varphi_\sigma, \psi_\sigma)\|_{[H^3(0,L)\cap H_0^2(0,L)]^2},
\end{split}
\end{equation*}
and we obtain that $\Delta \in \mathcal{L}([H^3(0,L)\cap H_0^2(0,L)]^2;\mathbb{R}). $ Thus, from the Riesz representation Theorem, there
exists one and only one $(\eta_\sigma,\omega_\sigma)\in[H^3(0,L)\cap H_0^2(0,L)]^2$ such that
\begin{equation}\label{Riesz}
    \begin{aligned}
    \begin{cases}
    &\Delta\left(\varphi_\sigma, \psi_\sigma\right)= \left\langle\left(\eta_\sigma,\omega_\sigma\right),(\varphi_\sigma, \psi_\sigma)\right\rangle_{[H^3(0,L)\cap H_0^2(0,L)]^2}
\\
&\text{with} \ \ \  \|\Delta\|_{\mathcal{L}([H^3(0,L)\cap H_0^2(0,L)]^2;\mathbb{R})}=\|\left(\eta_\sigma,\omega_\sigma\right)\|_{[H^3(0,L)\cap H_0^2(0,L)]^2}
\end{cases}
 \end{aligned}
\end{equation}
and we obtain the uniqueness of the solution to the problem \eqref{KdV-KdV-h1h2}. Now, in order to prove the estimate \eqref{Lemma:1}, we define the map $(\eta,\omega):[0,T]\rightarrow [H^3(0,L)\cap H_0^2(0,L)]^2 $ as
$$
(\eta(\sigma),\omega(\sigma)):=(\eta_\sigma,\omega_\sigma) \ \text{for all} \ \sigma \in [0,T].
$$
and from \eqref{Riesz} we conclude that
\begin{equation*}
\begin{aligned}
\|(\eta(\sigma), \omega(\sigma))\|_{[H^3(0,L)\cap H_0^2(0,L)]^2}=&\|\Delta\|_{\mathcal{L}([H^3(0,L)\cap H_0^2(0,L)]^2;\mathbb{R})} \\
\leq& C_T\left(\left\|\left(\eta_0, \omega_0\right)\right\|_{[H^3(0,L)\cap H_0^2(0,L)]^2}+\|f\|_{L^2(0,T)}\right. \\
& \left.+\left\|\left(h_1, h_2\right)\right\|_{L^2\left(0, T: [H_0^2(0,L)]^2\right)}\right).
\end{aligned}
\end{equation*}
Finally, the fact that $(\eta, \omega) \in C\left([0, T] ; X_3\right)$ was already proved in \cite{Capistrano2018, CF2019} and hence we omit the details.
\end{proof}
Now, we pass to show the well-posedness of the non-homogeneous feedback linear
system associated to \eqref{KdV-KdV-h1h2}

\begin{lemma}\label{lemma:f}
     Let $T>0$. Then, for every $\left(\eta_0, \omega_0\right)$ in ${X}_3$ and $\left(h_1, h_2\right)$ in $L^2\left(0, T ; [H^{-2}(0,L)]^2\right)$, there exists a unique solution $(\eta, \omega)$ of the system \eqref{KdV-KdV-h1h2} such that
$
(\eta, \omega) \in C\left([0, T] ; {X}_3\right),
$
with $f(t) =\alpha \eta_{xx}(t,L)+\beta \eta_{xx}(t-\tau(t),L)$, where $\alpha$ and $\beta$ belong to $\mathbb{R}$. Moreover, for some positive constant $C=C(T)$, we have
$$
\|(\eta(t), \omega(t))\|_{[H^3(0,L)\cap H_0^2(0,L)]^2} \leq C\left(\left\|\left(\eta_0, \omega_0\right)\right\|_{[H^3(0,L)\cap H_0^2(0,L)]^2}+\left\|\left(h_1, h_2\right)\right\|_{L^2\left(0, T ;[H^{-2}(0,L)]^2\right)}\right),
$$
for all $t\in [0,T].$
\end{lemma}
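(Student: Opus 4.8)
The plan is to close the feedback loop in Lemma~\ref{lemma:wellp} by a Banach fixed-point argument on a short time interval, and then to reach an arbitrary $T>0$ by concatenation. I would fix a length $T_0\in(0,\tau(0)]$, to be chosen, and work first on $[0,T_0]$. The point of taking $T_0\le\tau(0)$ is that for every $t\in[0,T_0]$ one has $t-\tau(t)\le T_0-\tau(0)\le 0$, so the delayed trace $\eta_{xx}(t-\tau(t),L)$ coincides with the prescribed history $z_0(t-\tau(t))$ (which is part of the data of \eqref{eq:KdV-KdV}); hence on $[0,T_0]$ the closed-loop boundary datum reads $f(t)=\alpha\,\eta_{xx}(t,L)+\beta\,z_0(t-\tau(t))$, with the second summand a \emph{known} function. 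Using the substitution $s=t-\tau(t)$, whose Jacobian $1-\dot\tau(t)\ge 1-d>0$ is bounded below by \eqref{eq:TauCond}, one checks that this function lies in $L^2(0,T_0)$ with $\|\beta\,z_0(\cdot-\tau(\cdot))\|_{L^2(0,T_0)}\le |\beta|(1-d)^{-1/2}\|z_0\|_{L^2(0,1)}$. I would then define $\Gamma\colon L^2(0,T_0)\to L^2(0,T_0)$ by $\Gamma(g)=\eta_{xx}(\cdot,L)$, where $(\eta,\omega)\in C([0,T_0];X_3)$ is the solution furnished by Lemma~\ref{lemma:wellp} for the boundary datum $f_g(t):=\alpha g(t)+\beta z_0(t-\tau(t))$; any fixed point of $\Gamma$ produces, by construction, a solution of \eqref{KdV-KdV-h1h2} with $f(t)=\alpha\eta_{xx}(t,L)+\beta\eta_{xx}(t-\tau(t),L)$.

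The next step is the contraction estimate, which I would base on the boundedness of the trace map $v\mapsto v_{xx}(L)$ on $H^3(0,L)\cap H_0^2(0,L)$ (since $v_{xx}\in H^1(0,L)\hookrightarrow C([0,L])$). This gives, for any solution $(\eta,\omega)$,
\[
\|\eta_{xx}(\cdot,L)\|_{L^2(0,T_0)}\le c\,T_0^{1/2}\,\|(\eta,\omega)\|_{C([0,T_0];[H^3(0,L)\cap H_0^2(0,L)]^2)} .
\]
Applying \eqref{Lemma:1} to the \emph{difference} of the two solutions associated with $g_1,g_2$ — which carries zero initial data, vanishing $h_1,h_2$, and boundary datum $f_{g_1}-f_{g_2}=\alpha(g_1-g_2)$ (the $z_0$-term cancels) — one gets
\[
\|\Gamma(g_1)-\Gamma(g_2)\|_{L^2(0,T_0)}\le c\,C_{T_0}\,|\alpha|\,T_0^{1/2}\,\|g_1-g_2\|_{L^2(0,T_0)} ,
\]
where $C_{T_0}$ is the constant of Lemma~\ref{lemma:wellp}, which stays bounded as $T_0\downarrow0$. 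Choosing $T_0\le\tau(0)$ small enough that $c\,C_{T_0}|\alpha|T_0^{1/2}<1$ makes $\Gamma$ a contraction; the same inequality, used with the full inhomogeneous form of \eqref{Lemma:1}, also shows that $\Gamma$ stabilizes a sufficiently large closed ball of $L^2(0,T_0)$. The Banach fixed-point theorem then gives a unique fixed point, hence a solution $(\eta,\omega)\in C([0,T_0];X_3)$ of the closed-loop system on $[0,T_0]$, and the asserted estimate on $[0,T_0]$ will follow from \eqref{Lemma:1} and the bound on $\|f_g\|_{L^2(0,T_0)}$.

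For uniqueness in $C([0,T_0];X_3)$ I would argue that any two solutions have traces in $C([0,T_0])\subset L^2(0,T_0)$ that are both fixed points of $\Gamma$, so the contraction estimate applied to their difference forces them to agree, whence $(\eta_1,\omega_1)=(\eta_2,\omega_2)$ by Lemma~\ref{lemma:wellp}. To pass to an arbitrary $T>0$, the crucial observation is that $T_0$ depends only on $|\alpha|$, $d$, $\tau(0)$ and the constant of Lemma~\ref{lemma:wellp}, \emph{not} on the data; I would therefore partition $[0,T]$ into $N=\lceil T/T_0\rceil$ consecutive subintervals of length $\le T_0\le\tau(0)$ and run the construction on each. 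On the $(k{+}1)$-th subinterval the delayed trace $\eta_{xx}(t-\tau(t),L)$ refers only to times $t-\tau(t)\le(k{+}1)T_0-\tau(0)\le kT_0$, hence is already determined by the previous steps, so the same fixed-point argument applies with initial datum $(\eta(kT_0),\omega(kT_0))\in X_3$. Concatenating the $N$ local solutions gives $(\eta,\omega)\in C([0,T];X_3)$, and chaining the corresponding estimates yields the stated bound with a constant of the form $C(T,\alpha,d,\tau(0))$.

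The main obstacle I anticipate is that condition \eqref{eq:CCond2} forces $|\alpha|$ to be \emph{large}, so a direct contraction in $L^2$ through $\|f_{g_1}-f_{g_2}\|_{L^2}=|\alpha|\|g_1-g_2\|_{L^2}$ is impossible; what rescues the argument is the gain of a factor $T_0^{1/2}$ obtained by bounding the boundary trace through the $C([0,T_0];[H^3\cap H_0^2]^2)$-norm — legitimate here precisely because that regularity is already provided by Lemma~\ref{lemma:wellp} — rather than through a sharp hidden-regularity trace estimate. By contrast the time delay is comparatively harmless: restricting the steps to length $\le\tau(0)$ converts the delayed trace into a known source on each step, and the only essential use of \eqref{eq:TauCond} is the change of variables $s=t-\tau(t)$ that keeps this source in $L^2$.
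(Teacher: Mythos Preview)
Your proposal is correct and rests on the same three ingredients as the paper --- Lemma~\ref{lemma:wellp}, the $T^{1/2}$ gain obtained by bounding the boundary trace through the $C([0,T];H^3)$-norm, and a Banach fixed point on a short interval followed by continuation --- but the implementation differs in two respects worth noting. First, the paper runs the contraction on the full solution space, defining $\Gamma\colon C([0,\gamma];[H^3\cap H_0^2]^2)\to C([0,\gamma];[H^3\cap H_0^2]^2)$ by $\Gamma(\eta,\omega)=(w,v)$ with $(w,v)$ the solution of \eqref{KdV-KdV-h1h2} for $f(t)=\alpha\eta_{xx}(t,L)+\beta\eta_{xx}(t-\tau(t),L)$; you instead iterate only on the scalar trace $g=\eta_{xx}(\cdot,L)\in L^2(0,T_0)$, which makes the map affine (so the contraction estimate alone gives the fixed point, and your remark about stabilizing a ball is unnecessary). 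Second, the paper does \emph{not} impose $\gamma\le\tau(0)$: it keeps the delayed term inside the feedback and lets the contraction close because the $z_0$-part cancels in differences while the remaining delayed trace inherits the same $\gamma^{1/2}$ factor via the change of variables $s=t-\tau(t)$. Your restriction $T_0\le\tau(0)$ is a clean alternative that converts the delayed trace into a known source on every step; it makes the dependence on the history $z_0$ explicit --- a $\|z_0\|_{L^2}$ contribution naturally appears in your final bound, which the paper's statement of the lemma omits --- and it slightly simplifies the contraction constant to $c\,C_{T_0}|\alpha|T_0^{1/2}$ rather than the paper's $C_{\alpha,\beta}\gamma^{1/2}$ carrying both $\alpha$ and $\beta$.
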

\begin{proof}
   Note that if $
(\eta, \omega) \in C\left([0, T] ; {X}_3\right),$ from the trace theorems, it follows that
$$
f(t) =\alpha \eta_{xx}(t,L)+\beta \eta_{xx}(t-\tau(t),L)\in L^2(0,T).
$$

We claim that:  there exists a positive constant $C_{\alpha,\beta}$ such that
\begin{equation}\label{ineq:f}
    \|f\|_{L^2(0,T)}\leq C_{\alpha,\beta}T^{1/2}\|(\eta, \omega)\|_{C([0,T];[H^3(0,L)\cap H_0^2(0,L)]^2)}. \end{equation}
    Indeed, note that
    \begin{equation*}
    \begin{aligned}
        \|f\|_{L^2(0,T)}^2&\leq |\alpha|^2 CT\|\eta\|_{C([0,T];H^3(0,L))}^2+|\beta|^2 \int_{0}^{T}|\eta(t-\tau(t),L)|^2dt \\
       & \leq |\alpha|^2 CT|\eta\|_{C([0,T];H^3(0,L))}^2+|\beta|^2 \int_{0}^{T-\tau(T)}|\eta(s,L)|^2 \frac{1}{1-\dot{\tau}(t)} ds.
        \end{aligned}
    \end{equation*}
    By using the conditions \eqref{eq:TauCond}, we deduce, for some positive constant $C_M,$ that
    \begin{equation*}
    \begin{aligned}
        \|f\|_{L^2(0,T)}^2  &\leq \left(|\alpha|^2 C+|\beta|^2 \frac{C_{M}}{1-d}\right)T\,\|\eta\|_{C([0,T];H^3(0,L))}^2,
        \end{aligned}
    \end{equation*}
    giving the claim.

 Now, let $0<\gamma\leq T$ be determined later. For each $(\eta_0,\omega_0)\in X_3,$ consider the map
\begin{equation*}
    \begin{aligned}
        \Gamma: C([0,\gamma];[H^3(0,L)\cap H_0^2(0,L)]^2)&\longrightarrow C([0,\gamma];[H^3(0,L)\cap H_0^2(0,L)]^2)
        \\
        (\eta,\omega) &\longmapsto \Gamma(\eta,\omega)=(w,v),
    \end{aligned}
\end{equation*}
where $(w,v)$ is the solution of \eqref{KdV-KdV-h1h2} with $f(t) =\alpha \eta_{xx}(t,L)+\beta \eta_{xx}(t-\tau(t),L).$ By Lemma \ref{lemma:wellp} and \eqref{ineq:f}, the linear operator $\Gamma$ is well defined.  Furthermore, there exists a positive
constant $C_\gamma$, such that
\begin{equation*}
    \begin{aligned}
    &\|\Gamma(\eta,\omega)\|_{C([0,\gamma];[H^3(0,L)\cap H_0^2(0,L)]^2)}  \leq   C_\gamma\left(\left\|\left(\eta_0, \omega_0\right)\right\|_{[H^3(0,L)\cap H_0^2(0,L)]^2} \right. \\
    &\ \ \ \ \ \ \left. +\|\alpha \eta_{xx}(L)+\beta \eta_{xx}(\cdot-\tau(\cdot),L)\|_{L^2(0,\gamma)} +\left\|\left(h_1, h_2\right)\right\|_{L^2\left(0,\gamma: [H^{-2}(0,L)]^2\right)}\right).
    \end{aligned}
\end{equation*}
From \eqref{ineq:f} it follows that
\begin{equation*}
    \begin{aligned}
    \|\Gamma(\eta,\omega)\|_{C([0,\gamma];[H^3(0,L)\cap H_0^2(0,L)]^2)} \leq&   C_\gamma\left(\left\|\left(\eta_0, \omega_0\right)\right\|_{[H^3(0,L)\cap H_0^2(0,L)]^2}  +\left\|\left(h_1, h_2\right)\right\|_{L^2\left(0,\gamma: [H^{-2}(0,L)]^2\right)}\right) \\
    &+ C_{\alpha,\beta}\gamma^{1/2}\|(\eta, \omega)\|_{C([0,T];[H^3(0,L)\cap H_0^2(0,L)]^2)}.  \end{aligned}
\end{equation*}
Let $(\eta,\omega)\in B_R(0)$ where
{\small $$ B_R(0):=\left\{(\eta,\omega)\in C\left([0,\gamma];[H^3(0,L)\cap H_0^2(0,L)]^2\right)\ : \|(\eta, \omega)\|_{C([0,\gamma];[H^3(0,L)\cap H_0^2(0,L)]^2)}\leq R \right\},$$}
and
$$R=2 C_T\left(\left\|\left(\eta_0, \omega_0\right)\right\|_{[H^3(0,L)\cap H_0^2(0,L)]^2}  +\left\|\left(h_1, h_2\right)\right\|_{L^2\left(0,T: [H^{-2}(0,L)]^2\right)}\right).$$
Choosing $\gamma$ such that $$C_{\alpha,\beta}\gamma^{1/2}\leq\frac{1}{2},$$ it follows that
$$
 \|\Gamma(\eta, \omega)\|_{C\left([0,\gamma];[H^3(0,L)\cap H_0^2(0,L)]^2\right)}\leq R
 $$
        and
  $$
        \|\Gamma(\eta_1, \omega_1)-\Gamma(\eta_2, \omega_2)\|_{C\left([0,\gamma];[H^3(0,L)\cap H_0^2(0,L)]^2\right)}\leq \frac{1}{2}\|(\eta_1, \omega_1)-(\eta_2, \omega_2)\|_{C\left([0,\gamma];[H^3(0,L)\cap H_0^2(0,L)]^2\right)}.
    $$
Hence, $\Gamma: B_R(0)\rightarrow B_R(0)$ is a contraction, By Banach fixed point theorem, we obtain a unique $(\eta,\omega)\in B_R(0),$ such that $\Gamma(\eta, \omega)=(\eta, \omega)$ and
\begin{equation*}
    \begin{split}
\|(\eta,\omega)\|_{C([0,\gamma];[H^3(0,L)\cap H_0^2(0,L)]^2)} \leq&   2C_T\left(\left\|\left(\eta_0, \omega_0\right)\right\|_{[H^3(0,L)\cap H_0^2(0,L)]^2}\right.\\&\left. +\left\|\left(h_1, h_2\right)\right\|_{L^2\left(0,T: [H^{-2}(0,L)]^2\right)}\right).
    \end{split}
\end{equation*}
Since $\gamma$ is independent of $(\eta_0,\omega_0)$ the standard continuation extension argument
yields that the solution $(\eta,\omega)$ belongs to $C([0,T];[H^3(0,L)\cap H_0^2(0,L)]^2),$ and the proof ends.
\end{proof}

The first main result of the article ensures the existence of local solutions to \eqref{eq:KdV-KdV} and is proved below.

\begin{proof}[Proof of Theorem \ref{nonlinearT}]
    Let $T>0$ and $\|(\eta_0, \omega_0)\|_{ [ H^3(0,L)\cap H_0^2(0,L)]^2}<\theta,$ where $\theta>0$ will be determined later. We know from \cite{CF2019} that for $(\eta, \omega)\in C\left([0, T];[ H^3(0,L)\cap H_0^2(0,L)]^2\right),$ there exists a positive constant $C_1,$ such that the following inequalities hold true
    $$\|\eta \omega_x\|_{L^2(0,T;L^2(0,L))}\leq C_{1}T^{1/2}\|(\eta,\omega)\|^2_{C(0,T;[H^3(0,L)\cap H^2_{0}(0,L)]^2)},$$
     $$\|\eta_x \omega_{xx}\|_{L^2(0,T;L^2(0,L))}\leq C_{1}T^{1/2}\|(\eta,\omega)\|^2_{C(0,T;[H^3(0,L)\cap H^2_{0}(0,L)]^2)}$$
     and
$$ \|\eta \omega_{xxx}\|_{L^2(0,T;L^2(0,L))} \leq C_{1}T^{1/2}\|(\eta,\omega)\|^2_{C(0,T;[H^3(0,L)\cap H^2_{0}(0,L)]^2)}. $$
Thus,  the nonlinearities
$$
(h_1,h_2):=\left(-(\eta \omega)_x - \alpha'(\eta \omega_{xx})_x,
- \omega \omega_x -c(\omega \omega_x)_{xx}- (\eta \eta_{xx})_x +\beta' \omega_x \omega_{xx}+\rho \omega \omega_{xxx}\right)
$$
belong to $L^2(0,T;[L^2(0,L)]^2),$ and
\begin{equation}\label{ineq:h1h2}
\begin{aligned}
\begin{cases}
    \|h_1\|_{L^2(0,T;L^2(0,L))} \leq (2+2|\alpha'|) C_{1}T^{1/2}\|(\eta,\omega)\|^2_{C(0,T;[H^3(0,L)\cap H^2_{0}(0,L)]^2)}, \\
    \|h_2\|_{L^2(0,T;L^2(0,L))} \leq (3+4|c|+|\beta'|+|\rho|) C_{1}T^{1/2}\|(\eta,\omega)\|^2_{C(0,T;[H^3(0,L)\cap H^2_{0}(0,L)]^2)}.
    \end{cases}
\end{aligned}
\end{equation}
Taking this into consideration, we define the following map
\begin{equation*}
    \begin{aligned}
        \Gamma: C([0,T];[H^3(0,L)\cap H_0^2(0,L)]^2)&\longrightarrow C([0,T];[H^3(0,L)\cap H_0^2(0,L)]^2)
        \\
        (\eta,\omega) &\longmapsto \Gamma(\eta,\omega)=(\bar{\eta},\bar{\omega}),
    \end{aligned}
\end{equation*}
where $(\bar{\eta},\bar{\omega})$ is the solution of \eqref{KdV-KdV-h1h2} with $$ (h_1,h_2)\in L^2(0,T;[L^2(0,L)]^2)\subset L^2(0,T;[H^{-2}(0,L)]^2)$$ as defined above, and with
$f(t) =\alpha \eta_{xx}(t,L)+\beta \eta_{xx}(t-\tau(t),L).$ From Lemma \ref{lemma:f} we find that $\Gamma$ is well defined and  there exists a positive constant $C_{T}$ such that
\begin{equation*}
\begin{aligned}
\|\Gamma(\eta, \omega)\|_{C([0,T]:[H^3(0,L)\cap H_0^2(0,L)]^2)} \leq& C_T\left(\left\|\left(\eta_0, \omega_0\right)\right\|_{[H^3(0,L)\cap H_0^2(0,L)]^2}+\left\|\left(h_1, h_2\right)\right\|_{L^2\left(0, T: [H_0^2(0,L)]^2\right)}\right).
\end{aligned}
\end{equation*}
On the other hand, from the inequalities \eqref{ineq:h1h2} we have that
\begin{equation}\label{ineq:Gamma}
\begin{aligned}
\|\Gamma(\eta, \omega)\|_{C([0,T]:[H^3(0,L)\cap H_0^2(0,L)]^2)} \leq &C_T\left\|\left(\eta_0, \omega_0\right)\right\|_{[H^3(0,L)\cap H_0^2(0,L)]^2} \\
&+C_T 13 K C_1T^{1/2}\|(\eta, \omega)\|_{C([0,T]:[H^3(0,L)\cap H_0^2(0,L)]^2)}^2,
\end{aligned}
\end{equation}
where $K=\max\{1,|c|,|\alpha'|,|\beta'|,|\rho|\}.$ Now, we  consider the ball
$$
B_{R}(0)=\{(\eta,\omega)\in C([0,T]:[H^3(0,L)\cap H_0^2(0,L)]^2) \, :\,   \|(\eta, \omega)\|_{C([0,T]:[H^3(0,L)\cap H_0^2(0,L)]^2)}\leq R\},
$$
with $$R=2C_T \|(\eta_0,\omega_0)\|_{[H^3(0,L)\cap H_0^2(0,L)]^2}.$$ The inequality \eqref{ineq:Gamma} leads to
$$
\|\Gamma(\eta, \omega)\|_{C([0,T]:[H^3(0,L)\cap H_0^2(0,L)]^2)} \leq  \frac{R}{2}+C_T 13 K C_1T^{1/2}R^2 \leq   \frac{R}{2}+C_T^2 26 K C_1T^{1/2}\theta R
$$
for all $(\eta,\omega)\in B_R(0).$ By choosing $\theta$ such that
$$
    C_T^2 26 K C_1T^{1/2}\theta < \frac{1}{4}
$$
we obtain that $\Gamma(B_{R}(0))\subset B_{R}(0).$ Finally, following the same argument as done in \ref{lemma:f}, we can conclude that $\Gamma$ is a contraction in $B_{R}(0)$, then, the Banach fixed-point theorem guarantees the existence of a unique  $(\eta,\omega)\in B_{R}(0)$ such that $\Gamma(\eta,\omega)=(\eta,\omega)$ and
$$
\|(\eta, \omega)\|_{C\left([0, T]: [ H^3(0,L)\cap H_0^2(0,L)]^2\right)} \leq 2C_T\left\|\left(\eta_0, u_0\right)\right\|_{[ H^3(0,L)\cap H_0^2(0,L) ]^2},
$$
achieving the proof.
\end{proof}

\begin{remark}
In \eqref{eq:TauCond}, the time-dependent delay is assumed to be positive for all $t\ge 0$. This requirement is relaxed in \cite{np3} since $\tau$ is allowed to degenerate. Notwithstanding, the problem in \cite{np3} is linear and hence simpler than ours. The key idea of the proof in \cite{np3} is to consider a new delay $\tau_{\epsilon}$ defined by
\[ \tau_{\epsilon}(t):=\epsilon+\tau_{\epsilon}(t),\]
where $0<{\epsilon}<\overline{\epsilon}$, for some $\overline{\epsilon}>0$. Therefore, $\tau_{\epsilon}$ satisfy \eqref{eq:TauCond} and hence the problem \eqref{eq:Cauchy} has a unique solution $U_{\epsilon}$. The whole task is to tend $\epsilon$ to $0$ under more regularity on the solution. We have tried to adopt this approach, but we faced difficulties because of the nonlinearities in our problem.
\end{remark}

\section{Long-time behavior of solutions}\label{sec3}
In this section, we are in a position to prove the second main result of our work. First, we demonstrate that the energy associated with \eqref{eq:KdV-KdVaaaa} is exponentially stable. Moreover, we establish that the solutions decay at an optimal rate.
\subsection{Proof of Theorem~\ref{th:Lyapunov0}}
Recall that Theorem \ref{well-lin} (see also Proposition \ref{pr:Diss}) guarantees the $L^2$ a priori estimate for the linear system \eqref{eq:Cauchy} whose operator is defined by \eqref{eq:A}-\eqref{DAA}. Therefore, the solutions of the system \eqref{eq:KdV-KdVaaaa} are globally well-posed. Whereupon, we can treat the exponential stabilization for this system.

To proceed, consider the following Lyapunov functional $$V(t) = E(t) -\mu_1V_1(t) + \mu_2 V_2(t),$$ where $\mu_1,\mu_2 \in \mathbb{R}^+$ will be chosen later. Here, $E(t)$ is the total energy given by \eqref{eq:En},   while
$$
V_1(t) = \int_0^L x\eta(t,x)\omega(t,x)\,dx $$
and $$ V_2(t) = \frac{\lvert\beta\rvert}{2} \tau(t) \int_0^1 (1-\rho) \eta_{xx}^2(t-\tau(t)\rho ,L)\,d\rho.$$
Observe that,
\begin{equation}\label{eq:EquivEV}
	(1-\max\lbrace \mu_1 L, \mu_2\rbrace) E(t) \leq V(t) \leq (1+\max\lbrace\mu_1 L,\mu_2\rbrace)E(t),
\end{equation} 	
by assuming $0<\mu_1 <1/L$ and $0<\mu_2 <1$.

On the other hand, using the system \eqref{eq:KdV-KdVaaaa} and the boundary conditions, we get that
\begin{equation}\label{de_V_1_im}
\begin{split}
  V_1'(t)=&\int_{0}^{L} x\eta_t\omega dx+\int_{0}^{L}x\eta \omega_t dx \\
=& -\frac{a_1 L}{2} \begin{pmatrix}
		\eta_{xx}(t,L) \\ \eta_{xx}(t-\tau(t),L)
	\end{pmatrix}^T
	\begin{pmatrix}
		\alpha^2+1 & \alpha\beta \\
		\alpha\beta & \beta^2
	\end{pmatrix}
	\begin{pmatrix}
		\eta_{xx}t,L) \\ \eta_{xx}(t-\tau(t),L)
	\end{pmatrix} \\
	&+\frac{1}{2} \int_{0}^{L}\left(\omega^2+\eta^2\right)dx-\frac{3 a}{2}\int_{0}^{L}\left(\omega_{x}^{2}+\eta_{x}^{2}\right)dx + \frac{5 a_1}{2}\int_{0}^{L}\left(\omega_{xx}^{2}+\eta_{xx}^{2}\right)dx.
\end{split}
\end{equation}
In addition, from \eqref{eq:tr} and by integration by parts, we deduce that
\begin{equation}\label{V_2'}
	V_2'(t)  = -\frac{\lvert\beta\rvert}{2}\int_0^1(1-\dot\tau(t)\rho)\eta_{xx}^2(t-\tau(t)\rho,L) d\rho + \frac{\lvert\beta\rvert}{2}\eta_{xx}^2(t,L).
\end{equation}
Thus, from \eqref{eq:EnDiss3}, \eqref{de_V_1_im} and \eqref{V_2'}, we have that
\begin{align}
V^{\prime}(t)+\lambda V(t) = S_1+S_2 + S_3,\label{DV}
\end{align}where
\begin{align*}
S_1=&	\frac{1}{2} \left\langle \Psi_{\mu_1,\mu_2} (\eta_{xx}(t,L), \eta_{xx}(t-\tau(t),L)), (\eta_{xx}(t,L), \eta_{xx}(t-\tau(t),L)) \right\rangle,
\end{align*}with (recall \eqref{matrix_nega_def_albe})
\begin{align*}
\Psi_{\mu_1,\mu_2} =
	\Phi_{\alpha,\beta}
	+ \frac{a_1 L\mu_1 }{2}\begin{pmatrix}
		\alpha^2+1 & \alpha\beta \\
		\alpha\beta & \beta^2
	\end{pmatrix}
	+ \frac{\lvert\beta\rvert\mu_2}{2} \begin{pmatrix}
		1 & 0 \\ 0 & 0
	\end{pmatrix},
\end{align*}
\begin{align*}
S_2 =& -\frac{\mu_1}{2} \int_{0}^{L}\left(\omega^2+\eta^2\right)dx+\frac{3 a \mu_1}{2}\int_{0}^{L}\left(\omega_{x}^{2}+\eta_{x}^{2}\right)dx+\frac{\lambda}{2} \int_0^L\left(\eta^2+\omega^2\right) dx  \\
& +\mu_1\lambda\int_{0}^{L} x\eta \omega dx -\frac{ 5 a_1 \mu_1}{2}\int_{0}^{L}\left(\omega_{xx}^{2}+\eta_{xx}^{2}\right)dx,
\end{align*}and
\begin{equation*}
				\begin{aligned}
		S_3=&-\mu_2\frac{\lvert\beta\rvert}{2}\int_0^1(1-\dot\tau(t)\rho)\eta_{xx}^2(t-\tau(t)\rho,L) d\rho + \frac{\lambda \lvert \beta\rvert }{2}\tau(t) \int_0^1 \eta_{xx}^2(t-\tau(t)\rho , L) d\rho\\
		& +\frac{\mu_2\lvert\beta\rvert{ \lambda}}{2} \tau(t) \int_0^1 (1-\rho) \eta_{xx}^2(t-\tau(t)\rho ,L) d\rho,
	\end{aligned}
\end{equation*}respectively.

The objective is to show that $V'(t)+\lambda V(t)\leq0$. To do so, let us analyze each term $S_i$ in \eqref{DV}, for $i=1,2,3$.

\vspace{0.2cm}

\noindent\textbf{Estimate of $S_1$:} Since the matrix  $\Phi_{\alpha,\beta}$ (see \eqref{matrix_nega_def_albe}) is definite negative, it follows from the continuity of the trace and determinant functions that one choose $\mu_1,\mu_2\in(0,1)$ sufficiently small so that the new matrix $\Psi_{\mu_1,\mu_2}$ is also negative definite. Thus,
$$
	S_1 \leq0.
$$

\vspace{0.2cm}

\noindent\textbf{Estimate of $S_2$:} Observe that using Poincar\'e inequality, we get
\begin{equation*}
	\begin{aligned}
		S_2
		\leq & \frac{L^2}{2 \pi^2}\lambda(1+\mu_1L) \int_{0}^{L}\left( \omega_x^2+\eta_x^2 \right)dx +\frac{3 a \mu_1}{2}\int_{0}^{L}\left(\omega_{x}^{2}+\eta_{x}^{2}\right)dx\\
  &  -\frac{5 a_1 \mu_1}{2}\int_{0}^{L}\left(\omega_{xx}^{2}+\eta_{xx}^{2}\right)dx\\
		\leq& \left[\frac{L^2}{2\pi^2}\left(\lambda\left(1+\mu_1L\right)\frac{L^2}{2\pi^2}+3 a \mu_1 \right)-\frac{5 a_1\mu_1}{2}\right]\int_{0}^{L}\left(\omega_{xx}^{2}+\eta_{xx}^{2}\right)dx.
	\end{aligned}
\end{equation*}
Thus,
\begin{equation*}
	S_2<0,
\end{equation*}
if
\begin{equation*}
	\lambda < \frac{\mu_1 \pi^2\left(5 a_1\pi^2-3 a L^2\right)}{L^4(1+\mu_1)}.
\end{equation*}
\vspace{0.2cm}

\noindent\textbf{Estimate of $S_3$:}  We proceed as in \cite{Boumediene2024}, choosing
\begin{equation*}
	\lambda < \frac{\mu_2(1-d)}{M(1+\mu_2)},
\end{equation*} it follows that
\begin{equation*}
S_3 < 0.
\end{equation*}Therefore, for the estimates above, we have
$$
\frac{d}{dt} V(t) + \lambda V(t) \leq 0,
$$ and,  since $V(t)$ satisfies \eqref{eq:EquivEV},  we deduce that
\begin{align*}
E(t) \leq  \zeta E(0)e^{-\lambda t}, \quad \forall t\geq 0,
\end{align*}for $\zeta>0$ and $\lambda> 0$ fulfilling \eqref{eq:r} and \eqref{eq:lambda}, respectively. This achieves the proof of the theorem.\qed

\subsection{Decay rate: an optimal result}
We can optimize the value of $\lambda$ in Theorem \ref{th:Lyapunov0} to obtain the best decay rate for the linear system \eqref{eq:KdV-KdVaaaa} in the following way:
\begin{proposition}\label{llllll} If the constant $\mu_1$ giving in Theorem \ref{th:Lyapunov0} is chosen as follows
\begin{equation}\label{eq:muOPT}
	\mu_1 \in \left[0,\frac{(2 a_1\alpha-\lvert\beta\rvert)(1-d)-a_1^2\lvert\beta\rvert}{L(1-d)(a_1^2+\alpha^2)} \right),
\end{equation}
then we can have that $\lambda$ is the largest possible.
\end{proposition}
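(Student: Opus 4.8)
The plan is to extract the best decay rate directly from the bound \eqref{eq:lambda}. Set $\lambda_1(\mu_1)=\frac{\mu_1\pi^2(5a_1\pi^2-3aL^2)}{L^4(1+\mu_1 L)}$ and $\lambda_2(\mu_2)=\frac{\mu_2(1-d)}{M(1+\mu_2)}$, so that Theorem~\ref{th:Lyapunov0} permits any $\lambda>0$ with $\lambda\le\min\{\lambda_1(\mu_1),\lambda_2(\mu_2)\}$. Since $0<L<\sqrt{\frac{5a_1}{3a}}\pi$ forces $5a_1\pi^2-3aL^2>0$, both $\lambda_1$ and $\lambda_2$ are of the form $t\mapsto \kappa t/(1+ct)$ with $\kappa,c>0$, hence strictly increasing on $(0,\infty)$ (the derivative equals $\kappa/(1+ct)^2>0$). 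Therefore maximizing the admissible decay rate amounts to taking $\mu_1$ and $\mu_2$ as large as the hypotheses of Theorem~\ref{th:Lyapunov0} allow.

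The next step is to identify the constraint that actually caps $\mu_1$. In the proof of Theorem~\ref{th:Lyapunov0}, $\mu_1$ and $\mu_2$ are only required to be small enough for the equivalence \eqref{eq:EquivEV} to hold (that is, $0<\mu_1<1/L$ and $0<\mu_2<1$) and for the symmetric matrix $\Psi_{\mu_1,\mu_2}$ to be negative definite; it is this last requirement that is binding for $\mu_1$. I would apply the criterion on leading principal minors: $\Psi_{\mu_1,\mu_2}$ is negative definite if and only if its $(1,1)$-entry is negative and $\det\Psi_{\mu_1,\mu_2}>0$. Expanding these two expressions — respectively affine and quadratic in $\mu_1$ — through the decomposition of $\Psi_{\mu_1,\mu_2}$ into $\Phi_{\alpha,\beta}$ plus its two perturbation terms, and keeping $\mu_2$ as free as possible, a direct computation shows that the values of $\mu_1$ for which some compatible $\mu_2\in(0,1)$ renders $\Psi_{\mu_1,\mu_2}$ negative definite are precisely those in the interval \eqref{eq:muOPT}. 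Its right endpoint is strictly positive exactly because of the standing assumption \eqref{eq:CCond2}: the numerator $(2a_1\alpha-|\beta|)(1-d)-a_1^2|\beta|$ equals $\det\Phi_{\alpha,\beta}/|\beta|$, with $\Phi_{\alpha,\beta}$ as in \eqref{matrix_nega_def_albe}, and $\det\Phi_{\alpha,\beta}>0$ is equivalent to \eqref{eq:CCond2}.

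Combining the two steps: for each $\mu_1$ in \eqref{eq:muOPT} one can pick a companion $\mu_2\in(0,1)$ so that every hypothesis of Theorem~\ref{th:Lyapunov0} holds, whence the decay rate may be taken up to $\min\{\lambda_1(\mu_1),\lambda_2(\mu_2)\}$; since $\lambda_1$ is strictly increasing, the supremum of the attainable values of $\lambda_1(\mu_1)$ over the interval \eqref{eq:muOPT} is obtained by letting $\mu_1$ tend to its upper endpoint. Choosing $\mu_1$ as in \eqref{eq:muOPT}, arbitrarily close to that endpoint, therefore makes $\lambda$ the largest possible, which is the claim. I expect the main obstacle to be the computation in the middle step: carrying out the two principal-minor inequalities for $\Psi_{\mu_1,\mu_2}$, checking which of them produces the clean bound \eqref{eq:muOPT}, and verifying that the companion $\mu_2$ can always be kept inside $(0,1)$ — large enough that $\lambda_2(\mu_2)$ does not fall below $\lambda_1(\mu_1)$.
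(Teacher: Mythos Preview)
There is a real gap in your final step. You correctly note that $\lambda_1(\mu_1)$ is strictly increasing and that the negative-definiteness of $\Psi_{\mu_1,\mu_2}$ is the binding constraint, but you then conclude that the best choice is to take $\mu_1$ ``arbitrarily close to the right endpoint'' of \eqref{eq:muOPT}. That is not correct: the condition $\Psi_{\mu_1,\mu_2}\prec 0$ \emph{couples} $\mu_1$ and $\mu_2$, and as $\mu_1$ approaches the right endpoint the largest admissible companion $\mu_2$ shrinks to $0$. Consequently $\lambda_2(\mu_2)\to 0$, so $\min\{\lambda_1(\mu_1),\lambda_2(\mu_2)\}\to 0$, not to its supremum. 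You flag this worry in your very last sentence but do not resolve it; in fact it is fatal to the argument as written, because the verification you postpone (``$\mu_2$ large enough that $\lambda_2(\mu_2)$ does not fall below $\lambda_1(\mu_1)$'') fails near the endpoint.

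The paper's proof makes this trade-off explicit. For each $\mu_1$ in \eqref{eq:muOPT} one takes $\mu_2=\mu_2(\mu_1)$ to be the supremum allowed by the negative-definiteness constraint; this is the origin of the closed expression $g(\mu_1)$, which is nothing but $\lambda_2\bigl(\mu_2(\mu_1)\bigr)$. One then studies $\lambda(\mu_1)=\min\{f(\mu_1),g(\mu_1)\}$ with $f=\lambda_1$ increasing and $g$ \emph{decreasing} on the interval. Since $f(0)=0<g(0)$ while $g$ vanishes at the right endpoint where $f$ is positive, the intermediate value theorem applied to $f-g$ gives a unique interior crossing point $\mu_1^\ast$, and the maximum of $\min\{f,g\}$ is attained exactly at $\mu_1^\ast$ --- strictly inside the interval, not at its boundary. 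To repair your approach you must carry out this coupling and locate the interior optimum rather than pushing $\mu_1$ to the endpoint.
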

\begin{proof}
Define the functions $f$ and $g$ $\colon \left[
	0, \frac{(2a_1\alpha-\lvert\beta\rvert)(1-d)-a_1^2\lvert\beta\rvert}{L(1-d)(a_1^2+\alpha^2)}
	\right]\to \mathbb{R}$
by
$$		f(\mu_1) = \frac{\mu_1 \pi^2\left(5a_1\pi^2-3aL^2\right)}{L^4(1+\mu_1L)},$$
and
$$	g(\mu_1) = \frac{(2a_1\alpha-|\beta|)(1-d)-a_1^2|\beta|-L(1-d)(a_1^2+\alpha^2)\mu_1}{M\left(2a_1\alpha(1-d)-a_1^2|\beta|-L(1-d)(a_1^2+\alpha^2)\mu_1\right)}(1-d),$$ respectively. On the other hand, let us consider $\lambda(\mu_1) = \min\lbrace f(\mu_1), g(\mu_1)\rbrace$. Thus,  we have the following claims.

\begin{claim}\label{RR1} The function $f$ (resp. $g$) is increasing (resp. decreasing) in the interval $$\left[0,\dfrac{(2a_1\alpha-\lvert\beta\rvert)(1-d)
-a_1^2\lvert\beta\rvert}{L(1-d)(a^2_1+\alpha^2)}\right).$$
\end{claim}

\noindent A simple computation shows that
$$ f^{\prime}(\mu_1)>0, \, \, \text{for all }\, \,  \mu_1 \geq 0$$ and hence  $f^{\prime}(\mu_1)>0$ for $\mu_1 \in\left[0,\frac{(2a_1\alpha-\lvert\beta\rvert)(1-d)-a^2_1\lvert\beta\rvert}{L(1-d)(a_1^2+\alpha^2)}\right)$.

\noindent Furthermore, one can rewrite $g$ as follows
$$g(\mu_1)=\dfrac{1-d}{M}-\frac{|\beta|(1-d)^2}{ML(1-d)(a_1^2+\alpha^2)}
\left(\dfrac{1}{\dfrac{2a_1\alpha(1-d)-a^2_1|\beta|}{L(1-d)(a_1^2+\alpha^2)}
-\mu_1}\right),$$ and thus
$$
	g^{\prime}(\mu_1)=-\dfrac{|\beta|(1-d)^2}{ML(1-d)(a_1^2+\alpha^2)}
	\left[
	\dfrac{1}{
		\left(
		\dfrac{2a_1\alpha(1-d)-a_1^2|\beta|}{L(1-d)(a_1^2+\alpha^2)}-\mu_1
		\right)^2
	}
	\right]<0.
$$ This ascertains the claim \ref{RR1}.

\begin{claim}\label{RR2}
There exists only one point $\mu_1$, satisfying~\eqref{eq:muOPT} such that $f(\mu_1)=g(\mu_1)$.
\end{claim}

\noindent Indeed, since
\begin{align*}
f(0)=0, \qquad   f\left(\dfrac{(2 a_1\alpha-\lvert\beta\rvert)(1-d)-a_1^2\lvert\beta\rvert}{L(1-d)(a_1^2+\alpha^2)}\right) >0,
\end{align*}and
\begin{equation*}
g(0)>0, \qquad g\left(\dfrac{(2a_1\alpha-\lvert\beta\rvert)(1-d)-a_1^2\lvert\beta\rvert}
{L(1-d)(a_1^2+\alpha^2)}\right)=0,
\end{equation*} the existence of this point is a direct consequence of the Mean Value Theorem, applied to the function $F=f-g$. The uniqueness follows from the fact that the function $F = f-g$ is increasing in this interval, and claim \ref{RR2} holds.

Lastly, thanks to the claims \ref{RR1} and \ref{RR2}, the maximum value of the function $\lambda$ is obtained when $\mu_1$ satisfying~\eqref{eq:muOPT}, where $f(\mu_1)=g(\mu_1)$, and the proof of Proposition \ref{llllll} is achieved.
\end{proof}

\section{Conclusion} \label{sec4}
This paper establishes the existence and uniqueness of a solution for a higher-order nonlinear Boussinesq system in a bounded domain, even when a time-dependent delay is present in one of the boundary conditions. Additionally, we prove that solutions to the linearized problem are exponentially stable, both results being obtained under certain conditions related to the system’s parameters and the delay. These findings extend the results of the second and third authors in \cite{Boumediene2024} for a higher-order dispersive system. Further comments on our results are provided below.
\begin{enumerate}
\item  It is worth mentioning that the solutions of the system \eqref{eq:KdV-KdV} obtained in Theorem \ref{nonlinearT} are local. Proving the global existence of solutions remains a challenge due to the absence of an a priori $L^2$-estimate. Specifically, it is challenging to tackle this problem within the energy space for the nonlinear system that includes a delay term.
\item  Observe that the restriction $0 < L < \sqrt{\frac{5 a_1}{3 a}}\pi$ in Theorem \ref{th:Lyapunov0} arises from the Kato smoothing effect, which does not occur in the lower-order Boussinesq system (see, for example, \cite{Boumediene2024}). This difference is because in system \eqref{eq:KdV-KdV}, we have spatial derivatives of orders three and five, both with positive signs. Thus, after performing some integration by parts, the left-hand side of \eqref{mul_part_1} contains the $H^1$-norm with a negative sign and the $H^2$-norm with a positive sign. To recover the $H^2$-norm, the Poincaré inequality must be applied, which imposes this restriction on the size of $L$.
\item A version of the higher-order Boussinesq system was proposed by \cite[equations (4.7) and (4.8), p. 283]{Olver} and is given by:
$$
\begin{cases}
\eta_t+u_x+\frac{1}{6} \beta\left(3 \theta^2-1\right) u_{x x x}+  \frac{1}{120} \beta^2\left(25 \theta^4-10 \theta^2+1\right) u_{x x x x x}\vspace{0.2cm} \\
\quad\quad\quad\quad\quad\quad\quad\quad\quad\quad\quad\quad\quad\quad\quad\quad\quad\quad\quad+\alpha(\eta u)_x+\frac{1}{2} \alpha \beta\left(\theta^2-1\right)\left(\eta u_{x x}\right)_x=0, \vspace{0.2cm}\\
u_t+\eta_x+\beta\left[\frac{1}{2}\left(1-\theta^2\right)-\tau\right] \eta_{x x x}+\beta^2\left[\frac{1}{24}\left(\theta^4-6 \theta^2+5\right)+\frac{\tau}{2}\left(\theta^2-1\right)\right] \eta_{x x x x x}\vspace{0.2cm} \\
\quad\quad\quad\quad\quad\quad\quad\quad\quad\quad\quad\quad\quad\quad\quad\quad+\alpha u u_x+\alpha \beta\left[\left(\eta \eta_{x x}\right)_x+\left(2-\theta^2\right) u_x u_{x x}\right]=0.
\end{cases}
$$
Through scaling, we arrive at the following system:
\begin{equation}\label{cagapa}
\begin{cases}\eta_t+u_x-a u_{x x x}+a_1(\eta u)_x+a_2\left(\eta u_{x x}\right)_x+b u_{x x x x x}=0, & \text { in }(0, L) \times(0, \infty), \\ u_t+\eta_x-a \eta_{x x x}+a_1 u u_x+a_3\left(\eta \eta_{x x}\right)_x+a_4 u_x u_{x x}+b \eta_{x x x x x}=0, & \text { in }(0, L) \times(0, \infty), \\ \eta(x, 0)=\eta_0(x), \quad u(x, 0)=u_0(x), & \text { in }(0, L),
\end{cases}
\end{equation}
where $a>0, b>0, a \neq b, a_1>0, a_2<0, a_3>0$ and $a_4>0$. The system \eqref{cagapa} was studied in \cite{CF2019}.  Using the same boundary conditions as in the problem \eqref{eq:KdV-KdV}, we believe that similar results proved in our work can be obtained for the system \eqref{cagapa} without the restriction over $L$ since the signal of the third derivatives in \eqref{cagapa} is negative instead of positive as in our case, see system \eqref{eq:KdV-KdV}.
\item It is important to point out that the system \eqref{eq:KdV-KdV} is shown to be locally well-posed, and hence we are unable to establish any exponential stability for the nonlinear problem. One interesting research avenue is to show a stability outcome for the nonlinear problem.
\end{enumerate}

\subsection*{Acknowledgment} The authors are grateful to the associate editor and the referees for the careful reading of this paper and their valuable suggestions and comments. Capistrano–Filho was partially supported by CAPES/COFECUB grant number 88887.879175/2023-00, CNPq grant numbers 421573/2023-6 and 307808/2021-1, and Propesqi (UFPE). Bautista  is supported by Universidad Tecnol\'ogica de los Andes, Abancay-Peru. Sierra is supported by Escola de Matem\'atica Aplicada, Funda\c{c}\~{a}o Get\'ulio Vargas, Rio de Janeiro-Brazil.
\subsection*{Data Availability} It does not apply to this article as no new data were created or analyzed in this study.
\subsection*{Conflict of interest} This work does not have any conflicts of interest.


\begin{thebibliography}{abc99xyz}

\bibitem{Valein2019} L. Baudouin, E. Cr\'epeau and J. Valein, \textit{Two approaches for the stabilization of nonlinear KdV equation with boundary time-delay feedback}, IEEE TAC 64:4 (2019), 1403--1414.
    
 \bibitem{bautista-micu-pazoto} G. J. Bautista and S. Micu and A. F. Pazoto,\ {\em On the controllability of a model system for long waves in nonlinear dispersive media}, Nonlinearity,  {34} (2021),  989--1013.

\bibitem{BP} G. J. Bautista and A. F. Pazoto,\, {\em On the controllability of a Boussinesq system for two-way propagation of dispersive waves}, J. Evol. Equ., {20}\, (2020), 607--630.

\bibitem{bautista-pazoto} G. J. Bautista and A. F. Pazoto,\, {\em Decay of solutions for a dissipative higher-order Boussinesq system on a periodic domain}, Commun. Pure Appl. Anal., { 19}\, (2020), 747--769.
		
\bibitem{Bona2002}  J. L. Bona, M. Chen, J.-C. Saut, \textit{Boussinesq equations and other systems for small-amplitude long waves in nonlinear dispersive media. I. Derivation and linear theory}, J. Non. Sci. {12} (2002), 283--318.

\bibitem{Bona2004} J. L. Bona, M. Chen and J.-C. Saut, \textit{Boussinesq equations and other systems for small-amplitude long waves in nonlinear dispersive media. II. The nonlinear theory}, Nonlinearity {17}, (2004) 925--952.


\bibitem{bou} J. V. Boussinesq, \textit{Théorie générale des mouvements qui sont propagés dans un canal rectangulaire horizontal}, C. R. Acad. Sci. Paris {72} (1871), 755--759


\bibitem{Boumediene2024} R. A. Capistrano--Filho, B. Chentouf,   V. H. Gonzalez Martinez and J. R. Muñoz,  \textit{On the boundary stabilization of the KdV--KdV system with time-dependent delay}, Nonlinear Analysis: Real World Applications, {79} (2024), 104122.

\bibitem{ZAMP} R. A. Capistrano--Filho, B. Chentouf, L. S. Sousa and V. H. Gonzalez Martinez,  \textit{Two stability results for the Kawahara equation with a time-delayed boundary control},  Z. Angew. Math. Phys. 74, 16 (2023).


 \bibitem{Capistrano2018} R. A. Capistrano--Filho and F. A. Gallego, \textit{Asymptotic behavior of Boussinesq system of KdV--KdV type}, Journal of Differential Equations {6} (2018), 2341--2374.

\bibitem{CF2019} R. A. Capistrano--Filho, F. A. Gallego,  and A. F. Pazoto, \textit{On the well-posedness and large-time behavior of higher order Boussinesq system}, Nonlinearity, {5} (2019), 1852--1881.




\bibitem{lions1} J.-L. Lions and E. Magenes,Probl\`emes aux Limites non Homog\`enes et Applications, vol. 1,
Dunod, Paris, 1968.

\bibitem{lions2} J.-L. Lions and E. Magenes, Probl\`emes aux Limites non Homog\`enes et Applications, vol. 2,
Dunod, Paris, 1968.



\bibitem{Nicaise2006}S. Nicaise and C. Pignotti,  \textit{Stability and instability results of the wave equation with a delay term in the boundary or internal feedbacks}, SIAM J. Control Optim.  {5} (2006), 1561--1585.

\bibitem{np3} S. Nicaise, C. Pignotti and J. Valein, \emph{Exponential stability of the wave equations with boundary time-varying delays}, {Discrete Contin. Dyn. Syst. Ser. S}, 4, 693--722 (2011).


\bibitem{Nicaise2009} S. Nicaise, J. Valein, and E. Fridman,  \textit{Stability of the heat and of the wave equations with boundary time-varying delays}, Discrete Continuous Dynamical Systems-S., {3} (2009), 559--581.

\bibitem{Olver}P. J. Olver, \textit{Hamiltonian and non-Hamiltonian models for water waves}, Lecture Notes Phys. {195} (1984), 273--96.

\bibitem{Pazy}  A. Pazy, \textit{Semigroups of Linear Operators and Applications to Partial Differential Equations}, volume 44 of Applied Math. Sciences, Springer-Verlag, New York, 1983.

\bibitem{Parada2022} H. Parada, C. Timimoun, J. Valein. \textit{Stability results for the KdV equation with time-varying delay}, Syst. Control Lett., 177, (2023), 105547.


\bibitem{fp-comun}  O. Sierra Fonseca and A. F. Pazoto, {\em On the lack of controllability of a higher-order
regularized long-wave system}, Commun. Math. Sci., { 20} (2022), 1243–1278.


\bibitem{Kato1970}  T. Kato, \textit{Linear evolution equations of “hyperbolic” type}. J. Fac. Sci. Univ. Tokyo Sect. I {17} (1970), 241--258.

\bibitem{Valein2022} J. Valein, \textit{On the asymptotic stability of the Korteweg-de Vries equation with time-delayed internal feedback}, Mathematical Control \& Related Fields, 12:3 (2022), 667--694.

    \bibitem {xyl} G. Q. Xu, S. P. Yung, and L. K. Li, \textit{Stabilization of wave systems with input delay in the boundary control}, ESAIM Control Optim. Calc. Var., {12} (2006), 770--785.



\end{thebibliography}
\end{document}